\numberwithin{equation}{section}
\DeclareMathOperator\ord{ord}
\newcommand{\C}{{\mathbb C}}
\newcommand{\N}{{\mathbb N}}
\newcommand{\Q}{{\mathbb Q}}
\newcommand{\Z}{{\mathbb Z}}
\newtheorem{theo}{{\sc \bf Theorem}}[section]
\newtheorem{lem}[theo]{{\sc \bf Lemma}}
\newtheorem{prop}[theo]{{\sc \bf Proposition}}
\newenvironment{rem}{\medskip\noindent{\bf Remark:\/} }{\medskip}
\newenvironment{defin}{\medskip\noindent{\bf Definition:\/} }{\medskip}
\begin{document}

\title{Crossed Product C$^*$-Algebras Associated with p-Adic Multiplication}

\author[Hebert]{Shelley Hebert}
\address{Department of Mathematics and Statistics,
Mississippi State University,
175 President's Cir. Mississippi State, MS 39762, U.S.A.}
\email{sdh7@msstate.edu}

\author[Klimek]{Slawomir Klimek}
\address{Department of Mathematical Sciences,
Indiana University-Purdue University Indianapolis,
402 N. Blackford St., Indianapolis, IN 46202, U.S.A.}
\email{sklimek@math.iupui.edu}

\author[McBride]{Matt McBride}
\address{Department of Mathematics and Statistics,
Mississippi State University,
175 President's Cir., Mississippi State, MS 39762, U.S.A.}
\email{mmcbride@math.msstate.edu}

\author[Peoples]{J. Wilson Peoples}
\address{Department of Mathematics,
Pennsylvania State University,
107 McAllister Bld., University Park, State College, PA 16802, U.S.A.}
\email{jwp5828@psu.edu}

\date{\today}

\begin{abstract}
We introduce and investigate some examples of C$^*$-algebras which are related to multiplication maps in the ring of $p$-adic integers. We find ideals within these algebras and use the corresponding short exact sequences to compute the $K$-Theory.  
\end{abstract}

\maketitle
\section{Introduction}
Many interesting examples of C$^*$-algebras with connections to number theory have been studied, see, for instance, \cite{cuntz2008c, li2014k}. Explorations of such algebras lead to intriguing problems and have inspired development of new C$^*$-algebraic techniques. 

One basic example of C$^*$-algebras related to number theory are the Bunce-Deddens (BD) algebras, a recent detailed study of which can be found in \cite{klimek2021aspects}. In special cases, BD algebras can be realized as crossed product algebras $C(\Z_p) \rtimes \Z$, where $C(\Z_p)$ denotes the continuous functions on the $p$-adic integers, and the automorphism implementing the action of $\Z$ makes use of the additive group structure of $\Z_p$. 

Since the $p$-adic integers also carry a ring structure, the above suggests studying maps $C(\Z_p) \to C(\Z_p)$ of the form 
$$f(x) \mapsto f(rx),$$ 
for $0\ne r \in \Z_p$, from a C$^*$-algebra perspective. Such maps are not always automorphisms, and hence the most natural setting for this investigation are the crossed products by endomorphism C$^*$-algebras. Early notions of crossed products by endomorphism were introduced by \cite{stacey1993crossed} and also \cite{paschke}. A similar, yet slightly different notion was suggested in \cite{exel2003new}, and further improved upon in \cite{exel2008new}. For the particular algebras considered in the present paper, these notions coincide. 

In this paper, we study the structure of C$^*$-algebras which are crossed products of $C(\Z_p)$ by endomorphisms corresponding to $p$-adic multiplication for three interesting cases of $p$-adic integers $r$ depending on the $p$-adic norm and the order of $r$. In particular, we relate those algebras with some familiar C$^*$-algebras and compute their $K$-Theory.

We start with a bit of necessary number theoretic preparations. We then discuss a closely related example of the Hensel-Steinitz C$^*$-algebra, a detailed study of which can be found in \cite{HKMP}. We conclude by providing an in depth exploration of crossed product of $C(\Z_p)$ by endomorphism associated with $p$-adic multiplication by $r$ for different special cases of $r \in \Z_p$. 

In particular, when $r \in \Z_p^\times$ is an element of the $p$-adic unit sphere $\Z_p^\times$, and depending on whether $r$ is or is not a root of unity in $\Z_p^\times$, we find geometrical representations of the crossed product which can be used to characterize an ideal in it, which can in turn be used to compute the $K$-Theory of the full algebra. On the other hand, when $r \in \Z_p$ is divisible by $p^N$ for some $N \geq 1$, we construct a C$^*$-algebra isomorphism with the Hensel-Steinitz C$^*$-algebra studied in \cite{HKMP}. 

The discussion separates into cases depending on the dynamics of the map $x\mapsto rx$, and the methods of proof are based on finding faithful representations of the crossed products. 

\section{C$^*$-Algebras Associated to p-Adic Multiplication }
\label{main algebra}
Here we introduce the main objects of  study in this paper. Given a $p$-adic integer $r \in \Z_p$, $r \neq 1$, consider the endomorphism $\alpha_r: C(\Z_p) \to C(\Z_p)$ given by 
\begin{equation*}
(\alpha_r f)(x) = \left\{
\begin{aligned}
&f\left(\frac{x}{r}\right) &&\textrm{ if }r|x \\
&0 &&\textrm{else.}
\end{aligned}\right.
\end{equation*}
Similarly, consider the map $\beta_r: C(\Z_p) \to C(\Z_p)$ given by 
\begin{equation*}
    (\beta_r f)(x) = f(rx). 
\end{equation*}
We have the following relation between $\alpha_r$ and $\beta_r$:
\begin{equation}
\label{alpha injective}
(\beta_r \circ \alpha_r)f(x) = f(x).
\end{equation}
In what follows, we investigate the structure of the crossed product C$^*$-algebra by endomorphism $\alpha_r$:
\begin{equation*}
    A_r : = C(\Z_p) \rtimes_{\alpha_r} \N,
\end{equation*}
utilizing the crossed product definition in \cite{stacey1993crossed}. Recall that the crossed product by endomorphism in our case is defined as the universal unital C$^*$-algebra generated by $C(\Z_p)$, with generators denoted as $M_f$, for $f \in C(\Z_p)$, and an additional generator $V_r$ satisfying relations 
$$
V_r^*V_r = I, \quad V_r M_f V_r^* = M_{\alpha_r(f)}. 
$$
It is immediate from the appendix in \cite{HKMP} that $\alpha_r$ is a monomorphism and that the range of $\alpha_r$ is hereditary.

We briefly describe how the resulting crossed products differ in three distinct cases. First is the case when $r \in \Z_p^\times$ and $r$ is not a root of unity, second is when $r$ is a root of unity, while the third is the case when $r \in \Z_p$ satisfies $|r|_p = p^{-N}$, $N>0$. The remainder of the paper is dedicated to a more in depth investigation of each case. 
\subsection{Case I: $C(\Z_p) \rtimes_{\alpha_r} \N$ when $r \in \Z_p^\times$ and $r$ is not a root of unity} A basic fact of $p$-adic integers is that if $|r|_p = 1$, then $r$ is invertible in $\Z_p$ with inverse satisfying  $|r^{-1}|_p = 1$. It follows that $\alpha_r$ is an automorphism. Indeed, we have that 
\begin{equation*}
    (\alpha_r \circ \beta_r f)(x) =  (\beta_r f)(r^{-1}x) = f(x),
\end{equation*}
in addition to equation \ref{alpha injective}. When this is the case, the crossed product by endomorphism coincides with the standard notion of a crossed product by automorphism. This is summarized in the following proposition. 
\begin{prop}
When $|r|_p = 1$, $\alpha_r$ is an automorphism and $A_r$ becomes the standard crossed product: 
\begin{equation*}
    A_r \cong C(\Z_p) \rtimes_{\alpha_r} \Z. 
\end{equation*}
\end{prop}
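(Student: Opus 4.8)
The plan is to leverage the fact, already recorded just above the statement, that when $|r|_p = 1$ the endomorphism $\alpha_r$ is invertible with $\alpha_r^{-1} = \beta_r$; what remains is to promote the isometry $V_r$ appearing in the definition of $A_r$ to a unitary and then match the universal properties of the two crossed products. The decisive observation is that $r \in \Z_p^\times$ forces $r\Z_p = \Z_p$, so that $\alpha_r(1) = 1$, i.e. $\alpha_r$ is unital. This is precisely the feature that distinguishes the present case from the case $|r|_p < 1$, where $\alpha_r(1)$ is the indicator of the proper clopen set $r\Z_p$ and $V_r V_r^*$ is only a proper projection.

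First I would show that $V_r$ is a unitary. Since $M$ is a unital $*$-homomorphism we have $M_1 = I$, and applying the covariance relation $V_r M_f V_r^* = M_{\alpha_r(f)}$ with $f = 1$ gives
\begin{equation*}
V_r V_r^* = V_r M_1 V_r^* = M_{\alpha_r(1)} = M_1 = I.
\end{equation*}
Together with the defining relation $V_r^* V_r = I$, this shows $V_r$ is a unitary in $A_r$. Consequently $n \mapsto V_r^n$ extends to a representation of $\Z$, and the pair $(M, V_r)$ is a covariant representation of the C$^*$-dynamical system $(C(\Z_p), \Z, \alpha_r)$, since $V_r^n M_f V_r^{-n} = M_{\alpha_r^n(f)}$ for all $n \in \Z$.

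Finally I would produce mutually inverse $*$-homomorphisms by invoking the two universal properties. On the one hand, in $B := C(\Z_p) \rtimes_{\alpha_r} \Z$ the canonical unitary $U$ and the embedding $M'$ satisfy $U^* U = I$ and $U M'_f U^* = M'_{\alpha_r(f)}$, so the universal property of $A_r$ yields a unital $*$-homomorphism $\Phi : A_r \to B$ with $\Phi(M_f) = M'_f$ and $\Phi(V_r) = U$. On the other hand, the covariant representation $(M, V_r)$ just exhibited, through the universal property of the full crossed product $B$, yields a unital $*$-homomorphism $\Psi : B \to A_r$ with $\Psi(M'_f) = M_f$ and $\Psi(U) = V_r$. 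Since $\Phi$ and $\Psi$ are mutually inverse on the generating sets, they are mutually inverse isomorphisms, giving $A_r \cong C(\Z_p) \rtimes_{\alpha_r} \Z$. I expect the only genuine subtlety to be bookkeeping in this last step: one must verify that the relations imposed in each universal algebra are actually satisfied by the proposed images in the other, and the linchpin that makes this work, rather than merely producing a one-sided quotient map, is the unitarity of $V_r$ established above, which is exactly what collapses the distinction between the Toeplitz-type endomorphism crossed product and the genuine automorphism crossed product.
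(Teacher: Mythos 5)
Your proof is correct and follows essentially the same route as the paper: both arguments hinge on the computation $V_rV_r^* = M_{\alpha_r(1)} = M_1 = I$ (valid because $r\in\Z_p^\times$ makes $\alpha_r$ unital) to promote $V_r$ to a unitary, and then identify the two crossed products. You simply spell out the universal-property bookkeeping in the final step, which the paper leaves implicit.
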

\begin{proof}
Since $I = M_1 = M_{\alpha_r(1)}$, where $1$ denotes the constant function in $C(\Z_p)$, we have that 
$$V_r V_r^* = I$$ and therefore $V_r$ is unitary.
Moreover, note that 
$$
M_f = V_r^*V_r M_f V_r^* V_r = V_r^* M_{\alpha_r(f)} V_r. 
$$
This is precisely $C(\Z_p) \rtimes_{\alpha_r} \Z$. This completes the proof. 
\end{proof}
We study the structure of this algebra thoroughly in Section \ref{Case I}. 

\subsection{Case II: $C(\Z_p) \rtimes_{\alpha_r} \N$ when $r \in \Z_p^\times$ and $r$ is a root of unity} 
The considerations above depend only on the invertibility of $r$ and not on  $r$ being a root of unity. 
Thus, we also have in this case:
\begin{equation*}
    A_r \cong C(\Z_p) \rtimes_{\alpha_r} \Z. 
\end{equation*}
However, if $r$ is a root of unity, a similar object of interest is the crossed product:
$$
A'_r := C(\Z_p) \rtimes_{\alpha_r} \Z/\!\ord(r)\Z,
$$
where $\ord(r)$ is the order of $r$ in $\Z_p^\times$ and is also the order of $\alpha_r$, the smallest positive integer $n$ such that $r^n=1$.
The algebras $A'_r$ and $A_r$ are closely related but not in an obvious way.  
The structure and $K$-Theory of those crossed product algebras is studied in Section \ref{Case II}.

\subsection{Case III: $C(\Z_p) \rtimes_{\alpha_r} \N$ when $r \in \Z_p$ and $|r|_p < 1$} When $|r|_p < 1$, $r$ is no longer invertible, and the notion of crossed product by endomorphism is needed. Following the definition in \cite{stacey1993crossed}, we have that $A_r$ is the universal C$^*$-algebra generated by elements of $C(\Z_p)$, along with an isometry $V_r$ satisfying the relation 
$V_r M_f V_r^* = M_{\alpha_r(f)}$.  Again, by the appendix in \cite{HKMP}  $\alpha_r$ is a monomorphism and  the range of $\alpha_r$ is hereditary.  
We give a complete study of the structure and $K$-Theory of this algebra in Section \ref{Case III}.

\section{Number Theory Preliminaries}
In this section, we review some group theory which is used in the construction and investigation of the algebras introduced in Section \ref{main algebra}. 
Specifically, given $r \in \Z_p^\times$ we consider the (closed) subgroup
$G_r : = \overline{\{ r^k : k \in \Z \}}$ of $\Z_p^\times$ and the quotient group $\Z_p^\times / G_r$. 
To describe $G_r$ and the quotient $\Z_p^\times / G_r$ one could use the well-known structure results on $\Z_p^\times$ that are obtained using p-adic logarithmic and exponential functions. Alternatively, and this is the more algebraic approach described below, one can use projective limit techniques.

First, we review the notion of a primitive root, and recall some of their useful properties and related concepts.

\subsection{Primitive Roots} 
Let $p$ be any odd prime, and denote by $U_N$ the set of elements of the ring $\Z / p^N \Z$ which have a multiplicative inverse. It is easy to see that 
$$
U_N = \{ 0 < k < p^N : \textup{gcd}(p,k) = 1 \},
$$
and that $U_N$ becomes a group when equipped with multiplication modulo $p^N$. The following theorem dates back to Gauss. 
\begin{theo}
(Gauss) $U_N$ is isomorphic to the cyclic group of order $p^{N-1}(p-1)$. 
\end{theo}
The above result gives rise to the following definition. 

\begin{defin}
We say that an element $a \in U_N \subseteq \Z / p^N \Z$ is a primitive root of $U_N$ if $a$ generates $U_N$. 
\end{defin}

Primitive roots enjoy the following useful properties:
\begin{itemize}
    \item If $a$ is a primitive root of $U_1$, then either $a$ or $a+p$ is a primitive root of $U_2$. 
    \item If $a$ is a primitive root of $U_N$ for $N \geq 2$, then $a$ is also a primitive root of $U_M$ for $M \geq N$.
\end{itemize}
\subsection{Subgroups of $U_N$} 
Next we consider subgroups of $U_N$ generated by a single  element $r$. In particular, the order of an element $r$ in $U_N$, denoted by $\ord_{U_N}(r)$, can be compared with its order in $U_{N+1}$ provided that $r$ satisfies certain properties. This result is summarized in the following proposition.
\begin{prop}
\label{order proposition}
Let $r$ be an integer not divisible by $p$ be such that there exists $M\geq 1$ such that $\ord_{U_M}(r)$ is divisible by $p$. Let $N_r$ denote the smallest such $M$. Then for all $N \geq N_r$, we have  
$$
\ord_{U_{N+1}} (r) = p \cdot \ord_{U_{N}}(r).  
$$
\end{prop}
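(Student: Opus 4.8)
The plan is to track how the order $d_N:=\ord_{U_N}(r)$ changes under the reduction homomorphisms $\pi_N\colon U_{N+1}\to U_N$ induced by $\Z/p^{N+1}\Z\to\Z/p^N\Z$, and then to isolate the exact power of $p$ dividing $r^m-1$. First I would record two elementary bounds. Since $\pi_N(r)=r$, any relation $r^{d_{N+1}}=1$ in $U_{N+1}$ descends to $U_N$, giving $d_N\mid d_{N+1}$. Conversely, writing $r^{d_N}=1+cp^N$ from $r^{d_N}\equiv 1\pmod{p^N}$ and raising to the $p$-th power shows $r^{p\,d_N}\equiv 1\pmod{p^{N+1}}$ whenever $N\ge 1$, so $d_{N+1}\mid p\,d_N$. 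As $p$ is prime, these divisibilities force $d_{N+1}\in\{d_N,\,p\,d_N\}$; equivalently, writing $d_N=e_N\,p^{f_N}$ with $p\nmid e_N$, the exponent $f_N$ is nondecreasing and rises by at most one at each step.

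Next I would pin down the prime-to-$p$ part. Using $U_N\cong(\Z/p^N\Z)^\times\cong C_{p-1}\times C_{p^{N-1}}$ together with the fact that reduction mod $p$ is exactly the projection onto the $C_{p-1}$ factor (the Sylow $p$-subgroup lying in the kernel), the prime-to-$p$ part of $d_N$ equals $\ord_{U_1}(r)$ for every $N$; call this common value $e$. Thus $d_N=e\,p^{f_N}$ with $e$ fixed, and everything reduces to understanding $f_N$. The crux is the following lifting lemma, valid because $p$ is odd: if $x=1+cp^k$ with $k\ge 1$ and $p\nmid c$, then the binomial expansion gives $x^p=1+cp^{k+1}+\binom{p}{2}c^2p^{2k}+\cdots$, and since $p$ is odd every term past the first two is divisible by $p^{k+2}$, so $x^p=1+c'p^{k+1}$ with $c'\equiv c\pmod p$ and hence $p\nmid c'$. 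In valuation terms, $v_p(x^p-1)=v_p(x-1)+1$ once $v_p(x-1)\ge 1$.

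Finally I would assemble these. Let $s:=v_p(r^e-1)$, which is finite under the hypothesis (otherwise $f_N\equiv 0$ and no $\ord_{U_M}(r)$ is divisible by $p$). Iterating the lifting lemma starting from $r^e=1+(\cdot)\,p^s$ yields $v_p(r^{e p^j}-1)=s+j$ for all $j\ge 0$. Since $f_N$ is the least $j$ with $v_p(r^{e p^j}-1)\ge N$, this gives $f_N=\max(0,\,N-s)$, so the smallest $M$ with $p\mid\ord_{U_M}(r)$ is $N_r=s+1$. For $N\ge N_r$ we then have $f_{N+1}=N+1-s=f_N+1$, that is, $\ord_{U_{N+1}}(r)=p\cdot\ord_{U_N}(r)$, as claimed.

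The main obstacle is the lifting lemma and, specifically, the assertion that the valuation increases by exactly one at each step: the upper bound $d_{N+1}\mid p\,d_N$ and the lower bound (that $p\nmid c$ is preserved, so the order does not stabilize) both hinge on the same binomial computation, and the requirement that $p$ be odd is genuinely used to discard the $\binom{p}{2}c^2p^{2k}$ term—this is exactly where the argument would fail for $p=2$. Care is also needed at the base of the iteration to confirm $f_{N_r}=1$ (equivalently $N_r\ge 2$, since $U_1$ has order $p-1$ prime to $p$), which anchors the conclusion.
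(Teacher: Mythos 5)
Your proof is correct, and it takes a genuinely different route from the paper's. The paper works inside the cyclic group $U_{N+1}$ via a primitive root $a$: it writes $r$ as a power of $a$, reduces mod $p^N$, and applies the order formula $\ord(a^k)=n/\gcd(n,k)$ to compare $d_N$ with $d_{N+1}$ directly, with the hypothesis $p\mid d_N$ entering through the evaluation of the gcd. You instead avoid primitive roots entirely: you sandwich $d_{N+1}$ between $d_N$ and $p\,d_N$ using the reduction homomorphism and a binomial expansion, split off the prime-to-$p$ part $e=\ord_{U_1}(r)$ via the structure $U_N\cong C_{p-1}\times C_{p^{N-1}}$, and then control the $p$-part by the lifting-the-exponent computation $v_p(x^p-1)=v_p(x-1)+1$ for $x\equiv 1\pmod p$ and $p$ odd. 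Your approach is more self-contained (no appeal to the existence of primitive roots mod $p^N$) and yields strictly more information: the closed form $f_N=\max(0,N-s)$ and the identification $N_r=v_p\bigl(r^{\ord_{U_1}(r)}-1\bigr)+1$, which the paper never makes explicit. The paper's argument is shorter given that it has already set up primitive roots in its preliminaries. Both arguments use oddness of $p$ at essentially the same underlying point (your discarded $\binom{p}{2}c^2p^{2k}$ term; the paper's reliance on $U_N$ being cyclic), and you correctly flag the base-case issues ($s$ finite under the hypothesis, $N_r\geq 2$), so I see no gaps.
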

\begin{proof}
Let $d_N$ denote the order of $r$ in $U_N$, and let $a$ be a primitive root of $U_N$ for all $N \geq 2$. Note that 
$$
r \equiv a^{\frac{p^{N}(p-1)}{d_{N + 1}}} \textup{ mod }  p^{N + 1},
$$
and hence
$$
r \equiv a^{\frac{p^{N}(p-1)}{d_{N + 1}}} \textup{ mod }  p^{N},
$$
since $p^{N} | p^{N + 1}$. Using a standard order formula for cyclic groups, we have that 
$$
d_{N} = \frac{p^{N -1 }(p-1)}{\textup{gcd}\left(p^{N - 1}(p-1), \frac{p^{N}(p-1)}{d_{N + 1}}\right)}. 
$$
Since $p | d_{N}$, and hence $p | d_{N + 1}$, we see that 
$$\textup{gcd}\left(p^{N - 1}(p-1), \frac{p^{N}(p-1)}{d_{N + 1}}\right) = \frac{p^{N}(p-1)}{d_{N + 1}}.$$
Hence, we obtain:
$$
d_{N} = \frac{p^{N -1 }(p-1)}{\frac{p^{N}(p-1)}{d_{N + 1}}} =  \frac{d_{N + 1}}{p}. 
$$
This completes the proof. 
\end{proof}
Similarly, letting $G_{r,N}$ denote the subgroup of $U_N$ generated by $r$,  the elements of $G_{r,M}$ can be described in terms of the elements of $G_{r,N}$ for $M \geq N$, provided that both $M$ and $N$ are sufficiently large. This description is summarized in the following proposition. 
\begin{prop}
\label{group proposition}
Let $r$ and $N_r$ be as in Proposition \ref{order proposition}. For all $N \geq N_r$, we have that 
$$
G_{r,N+1} = \{ k \in U_{N+1} : k \textup{ mod } p^N \in G_{r,N}\}. 
$$
\end{prop}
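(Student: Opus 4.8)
The plan is to exploit the reduction-mod-$p^N$ homomorphism and then match cardinalities using Proposition \ref{order proposition}. First I would introduce the natural surjective group homomorphism $\pi : U_{N+1} \to U_N$ sending a class mod $p^{N+1}$ to its reduction mod $p^N$; its image is all of $U_N$, and since $|U_{N+1}| = p^N(p-1)$ while $|U_N| = p^{N-1}(p-1)$, its kernel has order exactly $p$. The right-hand side of the claimed identity is precisely the preimage $\pi^{-1}(G_{r,N})$, so the statement to prove reduces to $G_{r,N+1} = \pi^{-1}(G_{r,N})$.

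One inclusion is immediate. Since $\pi(r) = r \textup{ mod } p^N$ generates $G_{r,N}$, we have $\pi(G_{r,N+1}) = G_{r,N}$, and therefore every element of $G_{r,N+1}$ reduces into $G_{r,N}$; that is, $G_{r,N+1} \subseteq \pi^{-1}(G_{r,N})$.

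For the reverse inclusion I would argue by counting. Because $\pi$ is surjective with kernel of order $p$, the preimage $\pi^{-1}(G_{r,N})$ is a disjoint union of $|G_{r,N}|$ cosets of $\ker \pi$, so $|\pi^{-1}(G_{r,N})| = p \cdot |G_{r,N}| = p \cdot \ord_{U_N}(r)$. On the other hand, $|G_{r,N+1}| = \ord_{U_{N+1}}(r)$, and Proposition \ref{order proposition} gives $\ord_{U_{N+1}}(r) = p \cdot \ord_{U_N}(r)$ for all $N \geq N_r$. Hence the two sets have the same finite cardinality, and combined with the inclusion of the previous paragraph this forces $G_{r,N+1} = \pi^{-1}(G_{r,N})$, which is exactly the asserted description.

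The key structural input is Proposition \ref{order proposition}: it is what guarantees that the order of $r$ grows by the full factor $p$ in passing from $U_N$ to $U_{N+1}$, and this is precisely what matches the $p$-fold multiplicity introduced by $\ker \pi$. The only step demanding any care is the cardinality bookkeeping, namely confirming that the preimage count and the order formula agree exactly; everything else is formal. I do not expect a genuine obstacle, since the hypothesis $N \geq N_r$ is exactly the regime in which Proposition \ref{order proposition} applies, and outside that regime the factor need not be $p$, so the equality could fail.
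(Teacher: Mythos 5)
Your proof is correct and follows essentially the same strategy as the paper's: establish the easy inclusion $G_{r,N+1} \subseteq \{k \in U_{N+1} : k \bmod p^N \in G_{r,N}\}$ by direct reduction, then conclude equality by counting, using Proposition \ref{order proposition} to see that both sets have exactly $p\cdot\ord_{U_N}(r)$ elements. Your phrasing of the count via the kernel of the reduction homomorphism is just a cleaner packaging of the paper's explicit enumeration of elements $k' + p^N k''$.
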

\begin{proof}
If $k \in G_{r,N+1}$, then there is $\ell$ such that 
$$r^\ell \equiv k \textup{ mod } p^{N+1}.$$ 
Clearly $r^\ell \equiv k \textup{ mod } p^N$, and so 
$$k \textup{ mod } p^N \in  G_{r,N}.$$ 
This shows 
$$G_{r,N+1} \subseteq \{ k \in U_{N+1} : k \textup{ mod } p^N \in G_{r,N}\}.$$ 
However, by the proof of Proposition \ref{order proposition}, the cardinality of $G_{r, N+1}$ is given by $pd_N$, where again $d_N$ denotes the order of $r$ in $U_N$. Moreover, elements of $$\{ k \in U_{N+1} : k \textup{ mod } p^N \in G_{r,N}\}$$ are given by 
$$
k = k' + p^N k'',
$$
where $k' \in G_{r,N}$, and $k'' \in \{ 0, \dots , p-1\}$. There are precisely $p d_N$ such elements. Hence, the sets coincide. This completes the proof. 
\end{proof}

\subsection{P-Adic Integers}
Given an integer $s>1$, and an increasing sequence of divisors $s | s^2 | s^3 \dots $, we have homomorphisms 
$$\pi_{n,m}: \Z / s^n \Z \to \Z / s^m \Z$$  
given by reduction mod $s^m$ for $n \geq m$. This gives rise to a directed system, whose inverse limit we define to be $\Z_s$: 
$$
\Z_s : = \varprojlim \Z / s^n \Z. 
$$
Of particular interest in this paper is the case when $s$ is an odd prime $p$ and $\Z_p$ is then the ring of p-adic integers. 

As $U_N \subseteq \Z / p^N\Z$ is the group of units of $\Z / p^N\Z$, we obtain for the group of units of $\Z_p$:
$$
\Z_p^\times : = \varprojlim U_N. 
$$
This is also termed the p-adic unit sphere, since it consists precisely of those $p$-adic integers with $p$-adic norm $1$, i.e. formal sums of the form $\sum_{i=0}^\infty a_i p^i$ which are not divisible by $p$. 

\subsection{Roots of Unity}
A corollary of Hensel's Lemma and Fermat's Little Theorem is that within $\Z_p^\times$, there exist $p-1$ roots of unity of orders dividing $p-1$. In other words, one can find p-adic integers $\omega_1, \omega_2 \dots , \omega_{p-1}$, such that for each $i$, 
$$
\omega_i^{p-1}  = 1.
$$
Moreover, simple divisibility arguments show that there are no other p-adic roots of unity. The group of all p-adic roots of unity will be denoted $\mathcal{G}_p$.

The above roots are typically arranged such that for every $0\leq i\leq p-1$ we have:
$$
\omega_i \equiv i \textup{ mod } p.
$$
It follows that any $z\in\Z_p^\times$ can be written uniquely as the product
\begin{equation}\label{root_decomp}
z=\omega_i y
\end{equation}
for some $0\leq i\leq p-1$ and $y\in 1+p\Z_p$. One can then write $y$ as a p-adic exponential, giving a nice description of the group structure of $\Z_p^\times$, though we will not use it in this paper.

\subsection{Groups $G_r$} 
We now investigate only those $r \in \Z_p^\times$ which are not roots of unity. Restricting to such $r$ allows for the following result which is a consequence of Proposition \ref{order proposition}. 
\begin{prop}
\label{order corollary}
Let $r \in \Z_p^\times$, and assume $r$ is not a root of unity. Then there exists $N_r$ such that for all $N \geq N_r$, 
$$
\ord_{U_{N+1}}(r) = p \cdot \ord_{U_N}(r). 
$$ 
\end{prop}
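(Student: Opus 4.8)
The plan is to reduce the statement to Proposition \ref{order proposition} by verifying its only nonobvious hypothesis, namely that some order $\ord_{U_M}(r)$ is divisible by $p$. Once this is established, I would let $N_r$ be the smallest such $M$ and invoke Proposition \ref{order proposition} directly. Note that although that proposition is phrased for a rational integer $r$, its proof uses $r$ only through its residues modulo powers of $p$, and therefore applies verbatim to a $p$-adic unit via the reductions $r \bmod p^{N} \in U_N$; I would remark on this at the outset so the application is justified.

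So the heart of the matter is to show that if no $\ord_{U_M}(r)$ is divisible by $p$, then $r$ must be a root of unity. I would argue by contradiction, assuming $p \nmid \ord_{U_N}(r)$ for every $N$. Two elementary facts about the cyclic groups $U_N$ drive the argument. First, the reduction map $U_{N+1} \to U_N$ is a surjective homomorphism sending $r \bmod p^{N+1}$ to $r \bmod p^N$, so $\ord_{U_N}(r) \mid \ord_{U_{N+1}}(r)$; in particular the sequence of orders is nondecreasing as integers. Second, since $|U_N| = p^{N-1}(p-1)$ and the order $\ord_{U_N}(r)$ is assumed coprime to $p$, that order must divide $p-1$. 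Hence every term of the nondecreasing integer sequence $\{\ord_{U_N}(r)\}_N$ is bounded above by $p-1$, so the sequence stabilizes: there exist $d \mid (p-1)$ and $N_0$ with $\ord_{U_N}(r) = d$ for all $N \geq N_0$.

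With stabilization in hand, I would pass to the inverse limit. For every $N \geq N_0$ we have $r^d \equiv 1 \bmod p^N$, and since $\Z_p = \varprojlim \Z / p^N \Z$ these compatible congruences assemble to the identity $r^d = 1$ in $\Z_p$. This exhibits $r$ as a $p$-adic root of unity, contradicting the hypothesis. Therefore some $\ord_{U_M}(r)$ is divisible by $p$, and the proof is completed by applying Proposition \ref{order proposition} with $N_r$ the smallest such $M$.

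I expect the main obstacle to be the final stabilization-plus-inverse-limit step, specifically making precise that a stable value $d$ of the orders yields $r^d = 1$ in $\Z_p$ rather than merely modulo each $p^N$. The two supporting facts, monotonicity and the bound by $p-1$, are exactly what force the orders to be eventually constant, so the care required is in confirming that the reduction homomorphisms genuinely give $\ord_{U_N}(r) \mid \ord_{U_{N+1}}(r)$, since it is this nesting that makes "bounded and nondecreasing" applicable.
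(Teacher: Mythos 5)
Your proposal is correct and follows essentially the same route as the paper: both arguments reduce to Proposition \ref{order proposition} by showing that the nondecreasing sequence of orders $\ord_{U_N}(r)$ must eventually acquire a factor of $p$, since otherwise it would be bounded by $p-1$ and hence stabilize, forcing $r$ to be a root of unity. Your write-up merely makes explicit the stabilization-plus-inverse-limit step (and the harmless passage from integer $r$ to a $p$-adic unit) that the paper's proof leaves implicit.
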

\begin{proof}
Let $d_N$ denote the order of $r$ in $U_N$. For any $N$, 
$$d_N | p^N (p-1).$$ 
However, since 
$$r^{d_{N+1}} \equiv 1 \textup{ mod } p^{N+1} \implies r^{d_{N+1}} \equiv 1 \textup{ mod } p^{N},$$
we see that $d_{N+1} \geq d_N$ and that the orders are increasing. Since $r$ is not a root of unity, and the orders grow as $N$ increases, we see that eventually $d_N$ must gain a power of $p$, i.e. there exists $N_r$ such that $d_{N_r}$ is divisible by $p$. Hence, the hypothesis of Proposition \ref{order proposition} is satisfied. The exact same argument contained in the proof of Proposition \ref{order proposition}  shows the final result. 
\end{proof}
The following proposition follows immediately from  Proposition \ref{group proposition}.
\begin{prop}
\label{group corollary}
Let $r \in \Z_p^\times$ and assume $r$ is not a root of unity. Then for all $N \geq N_r$, we have 
$$
G_{r, N+1} = \{k \in U_{N+1} : k \textup{ mod }p^N \in G_{r,N}\}. 
$$
\end{prop}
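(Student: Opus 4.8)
The plan is to reduce the statement to the integer case already handled in Proposition \ref{group proposition}. The first observation is that for $r\in\Z_p^\times$ all the relevant objects---the image of $r$ in $U_N$, its order $\ord_{U_N}(r)$, and the generated subgroup $G_{r,N}\subseteq U_N$---depend only on the reduction $r\bmod p^N$. In particular, for a fixed $N$ they are left unchanged if $r$ is replaced by any integer congruent to it modulo a sufficiently high power of $p$.

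With this in mind, I would fix $N\geq N_r$ and choose an ordinary integer $\tilde r$ with $\tilde r\equiv r\bmod p^{N+1}$; since $r\in\Z_p^\times$ this $\tilde r$ is not divisible by $p$. Because $\tilde r\equiv r\bmod p^{N+1}$ and $N+1>N_r$, the orders agree up through level $N+1$, so in particular $\ord_{U_{N_r}}(\tilde r)=\ord_{U_{N_r}}(r)=d_{N_r}$. By the proof of Proposition \ref{order corollary}, $d_{N_r}$ is divisible by $p$, so $\tilde r$ satisfies the hypothesis of Proposition \ref{order proposition}, and the smallest index witnessing this is at most $N_r\leq N$.

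Next I would apply Proposition \ref{group proposition} to the integer $\tilde r$ at level $N$, which lies above the relevant threshold, to conclude
\[
G_{\tilde r,N+1}=\{\,k\in U_{N+1}: k\bmod p^{N}\in G_{\tilde r,N}\,\}.
\]
Finally, translating back through the equalities $G_{\tilde r,N}=G_{r,N}$ and $G_{\tilde r,N+1}=G_{r,N+1}$, which hold because $\tilde r\equiv r\bmod p^{N+1}$, yields the claimed description of $G_{r,N+1}$.

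The only genuine point requiring care---and the reason the result is not quite a verbatim restatement of Proposition \ref{group proposition}---is the passage from the integer hypotheses of Propositions \ref{order proposition} and \ref{group proposition} to the $p$-adic element $r$. I expect the main (though modest) obstacle to be verifying cleanly that every finite-level quantity appearing in those propositions factors through the finite quotient $\Z/p^{N+1}\Z$, so that choosing a single integer representative $\tilde r$ of $r\bmod p^{N+1}$ legitimately transports the conclusion back to $r$; once this is isolated, the argument is indeed immediate.
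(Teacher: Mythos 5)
Your proposal is correct and matches the paper's approach: the paper simply declares that the statement ``follows immediately from Proposition \ref{group proposition}'', and your reduction via an integer representative $\tilde r\equiv r\bmod p^{N+1}$ (noting that $\ord_{U_M}(\cdot)$ and $G_{\cdot,M}$ for $M\leq N+1$ depend only on the class mod $p^{N+1}$, and that $p\mid d_{N_r}$ by the proof of Proposition \ref{order corollary}) is precisely the bridging detail the paper leaves implicit.
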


Given $r \in \Z_p^\times$ we consider the closed subgroup of $\Z_p^\times$ generated by $r$:
$$
G_r : = \overline{\{ r^k : k \in \Z \}}. 
$$
We have the following result regarding the quotient group $\Z_p^\times / G_r$. This is the main result of this section.
\begin{theo}
\label{discrete quotient}
Let $N_r$ be as in Proposition \ref{order corollary}, and assume $r \in \Z_p^\times$ is not a root of unity. We have the following isomorphism of groups:
$$
\Z_p^\times / G_r \cong U_{N_r} / G_{r, N_r}.     
$$
In particular, $\Z_p^\times/G_r$ is a finite group.
\end{theo}
\begin{proof}
We first prove that 
$$
\varprojlim G_{r,N} = G_r. 
$$
Note that 
$$
\varprojlim G_{r,N} : = \{ x \in \Z_p^\times : x \textup{ mod } p^N \in G_{r,N} \textup{ } \forall N\geq 1 \}. 
$$
Notice that $G_r \subseteq \varprojlim G_{r,N}$. Indeed, since powers of $r$ are  in $\varprojlim G_{r,N}$ and $\varprojlim G_{r,N}$ is closed,  we have that  inclusion for the closure:
$$\overline{\{r^k\}}_{k \in \Z} = G_r \subseteq \varprojlim G_{r,N}.$$ 
To see the other inclusion, let $x \in \varprojlim G_{r,N}$. For any $M \geq 1$ we have: 
$$x \textup{ mod } p^M \in G_{r,M}.$$ 
Hence, by definition there is $k_M$ such that 
$$r^{k_M} \equiv x \textup{ mod }p^M.$$ 
But this means that 
$$|r^{k_M} - x|_p \leq p^{-M}.$$  
Hence, $r^{k_M} \to x$ in $p$-adic norm as $M \to \infty$. Since $\{ r^{k_M}\}_{M=1}^\infty \subseteq G_r$, it follows $x \in G_r$. This proves that 
$$
\varprojlim G_{r,N} = G_r. 
$$

Next, we  give a more useful interpretation of $\varprojlim G_{r,N}$. If $x \in G_{r,N_r}$, then certainly $x \in G_{r, N}$ for $N \leq N_r$. However, by Proposition \ref{group corollary}, $G_{r, N_r + 1}$ is identified with those $k \in U_{N_r + 1}$ satisfying 
$$k \textup{ mod }p^{N_r} \in G_{r,N_r}.$$ 
Applying this proposition $M-N_r$ consecutive times, we see that $G_{r, M}$ consists of elements of the form  
$$
k_0 + p^{N_r} k_{N_r} + p^{N_r + 1} k_{N_r + 1} + \dots + p^{M-1}k_{M-1},
$$
where $k_0 \in G_{r,N_r}$ and $k_i \in \{0, \dots , p-1 \}$ for $i \neq 0$. This shows that
\begin{equation*}
\varprojlim G_{r,N} = \{ x \in \Z_p^\times : x \textup{ mod }p^{N_r} \in G_{r,N_r} \}. 
\end{equation*}
Consequently, the map 
$$\Z_p^\times / G_r \to U_{N_r} / G_{r,N_r}$$ 
given by 
$$
x G_r \mapsto (x \textup{ mod } p^{N_r})\, G_{r,N_r}
$$
is an isomorphism. 
Hence, we have that 
$$
\Z^\times_p / G_r =  \varprojlim U_N / \varprojlim G_{r,N} \cong U_{N_r} / G_{r, N_r}.
$$
This completes the proof. 
\end{proof}

\section{The Hensel-Steinitz Algebras}
\label{Hensel}
The Hensel-Steinitz algebras, introduced and studied in \cite{HKMP}, are closely related to the objects of study in this paper. In fact, we show in Section \ref{Case II} that for appropriate $r$, the C$^*$-algebra associated with $p$-adic multiplication by $r$ is an example of a Hensel-Steinitz algebra. In what follows, we discuss the definition of the Hensel-Steinitz algebras, and briefly review their structure and $K$-theory. For a more detailed discussion of the content of this section, we direct the reader to \cite{HKMP}. 
\subsection{Endomorphisms of $C(\Z_s)$} Let  $s>1$ be an integer, not necessarily a prime, and let $\Z_s$ be the $s$-adic ring defined in the previous section. Define a map $\alpha_s:C(\Z_s)\to C(\Z_s)$ by 
\begin{equation*}
(\alpha_s f)(x) = \left\{
\begin{aligned}
&f\left(\frac{x}{s}\right) &&\textrm{ if }s|x \\
&0 &&\textrm{else}
\end{aligned}\right.
\end{equation*}
Similarly, define a map $\beta_s:C(\Z_s)\to C(\Z_s)$ via 
$$(\beta_s f)(x) = f(sx).$$
We see that both maps are indeed endomorphisms of $C(\Z_s)$. We have the following relation between $\alpha_s$ and $\beta_s$:
\begin{equation*}
(\beta_s\circ\alpha_s)f(x) = \beta_s(\alpha_s(f))(x) = (\alpha_s f)(sx) = f(x)
\end{equation*}
for any $f\in C(\Z_s)$. Thus, $\alpha_s$ is an injection. 
\subsection{The Hensel-Steinitz Algebra as a Crossed Product}
We define the Hensel-Steinitz algebra to be the following crossed product by endomorphism $\alpha_s$:
\begin{equation*}
HS(s) = C(\Z_s) \rtimes_{\alpha_s}\N\,,
\end{equation*}
where the above crossed product by endomorphism is in the sense of \cite{stacey1993crossed}:  the universal unital C$^*$-algebra generated by the shift $V_s$ and multiplication operator $M_f$ satisfying the relations
$$V_s^* V_s = I, \ \ V_sM_fV_s^* = M_{\alpha_s(f)}.$$
This algebra can be represented faithfully on $\ell^2(\Z_{\geq 0})$ as the C$^*$-algebra generated by a shift, denoted below by $V$, and by multiplication operators. Indeed, consider $\ell^2(\Z_{\geq 0})$ and let $\{E_l\}_{l\in\Z_{\geq 0}}$ be its canonical basis. Since $\Z_{\geq 0}$ is a dense subset of $\Z_s$, the mapping 
$$C(\Z_s)\to B(\ell^2(\Z_{\geq 0})),\  f\mapsto \mu _f,$$ 
with $\mu_f$ given by 
$$\mu_fE_l = f(l)E_l,$$ 
is a faithful representation of $C(\Z_s)$ on $\ell^2(\Z_{\geq 0})$. Let $V$ be the following shift operator defined on $E_l$ by
\begin{equation}
\label{V shift}
VE_l = E_{sl}\,.
\end{equation}
A simple calculation verifies that
\begin{equation*}
V^*E_l = \left\{
\begin{aligned}
&E_{l/s} &&\textrm{ if } s|l \\
&0 &&\textrm{else.}
\end{aligned}\right.
\end{equation*}
We state here the following proposition, which is proved in \cite{HKMP}. 
\begin{prop}
We have the following isomorphism between C$^*$-algebras:
\begin{equation*}
HS(s) = C(\Z_s)\rtimes_\alpha\N \cong C^*(V,\mu_f: f \in C(\Z_s))\,.
\end{equation*}
\end{prop}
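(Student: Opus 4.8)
The plan is to realize the isomorphism as the map coming from the universal property of $HS(s)$ and then to prove its injectivity by a gauge-invariance argument, with special attention to the single fixed point of multiplication by $s$. First I would check that the concrete operators satisfy the defining relations: $V^*V = I$ is immediate from $VE_l = E_{sl}$, and testing on the basis $E_l$ shows $V\mu_f V^* = \mu_{\alpha_s(f)}$, since both sides send $E_l$ to $f(l/s)E_l$ when $s\mid l$ and to $0$ otherwise. Hence the universal property produces a surjective unital $*$-homomorphism $\Phi\colon HS(s)\to C^*(V,\mu_f)$ with $V_s\mapsto V$ and $M_f\mapsto\mu_f$; the entire problem reduces to showing $\Phi$ is faithful.

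Next I would record a normal form in $HS(s)$. From $V_s^*V_s = I$ and $V_sM_fV_s^* = M_{\alpha_s(f)}$ one deduces $V_sM_f = M_{\alpha_s(f)}V_s$, $M_fV_s^* = V_s^*M_{\alpha_s(f)}$, and $V_sV_s^* = M_{\chi}$ with $\chi$ the indicator of $s\Z_s$; combining these with $\alpha_s(\beta_s f)=f\chi$ gives $V_s^*M_fV_s = M_{\beta_s(f)}$. These relations reduce every word to the form $V_s^{*a}M_fV_s^{b}$, so $HS(s)$ is the closed span of such terms. By universality the circle acts via $\gamma_z(V_s)=zV_s$, $\gamma_z(M_f)=M_f$, and averaging over $z\in\T$ is a faithful conditional expectation onto the fixed-point algebra $\overline{\operatorname{span}}\{V_s^{*a}M_fV_s^a\}$, which equals the coefficient copy $\{M_g:g\in C(\Z_s)\}$ (the terms with $a=b$, using $V_s^{*a}M_fV_s^{a}=M_{\beta_s^a(f)}$).

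The key obstacle is that this gauge action is not spatially implemented on all of $\ell^2(\Z_{\geq 0})$: the point $0\in\Z_s$ is fixed by $x\mapsto sx$, so $VE_0 = E_0$, and no diagonal unitary can satisfy $u_zVu_z^* = zV$ at $E_0$. I would circumvent this by splitting off that point. Both $\C E_0$ and $\ell^2(\Z_{>0})$ are invariant under $V$, $V^*$ and every $\mu_f$, so $\Phi = \pi_0\oplus\pi_+$. On $\ell^2(\Z_{>0})$ the orbits of multiplication by $s$ are the sets $\{s^k m : k\geq 0\}$ with $s\nmid m$, on each of which $V$ is a unilateral shift; thus $V|_{\ell^2(\Z_{>0})}$ is a pure isometry, and the diagonal unitary $u_zE_{s^km} = z^{k}E_{s^km}$ (raising to the power of the $s$-adic valuation) implements the gauge action, so $\pi_+$ is equivariant. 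Moreover $\pi_+$ is faithful on the coefficient algebra, because $\mu_g|_{\ell^2(\Z_{>0})} = 0$ forces $g(l)=0$ for all $l\geq 1$, and $\Z_{\geq 1}$ is still dense in $\Z_s$ (as $s^n\to 0$), whence $g=0$.

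Finally I would invoke the gauge-invariance uniqueness theorem for Toeplitz-type crossed products, which applies because, by the appendix of \cite{HKMP}, $\alpha_s$ is a monomorphism with hereditary range, so $HS(s)$ is the Toeplitz algebra of the associated self-correspondence over $C(\Z_s)$. Since $\pi_+$ admits a compatible gauge action and is injective on the coefficient algebra, the theorem yields that $\pi_+$ is injective on all of $HS(s)$; as $\ker\Phi\subseteq\ker\pi_+ = 0$, the map $\Phi$ is injective, hence an isomorphism. The main difficulty throughout is exactly the handling of the fixed point $E_0$, which breaks global equivariance, and the device that resolves it is passing to the reducing subspace $\ell^2(\Z_{>0})$, where the generating isometry is pure and the uniqueness theorem becomes applicable.
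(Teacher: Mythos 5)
Your argument is correct in substance, and it is worth noting that the paper itself gives no proof of this proposition at all --- it simply defers to \cite{HKMP}. Your method is, however, exactly the circle of ideas this paper uses for its other faithfulness results (Theorem \ref{ideal_structure} and Case III): verify the relations to get the surjection $\Phi$ from universality, check faithfulness on the coefficient algebra $C(\Z_s)$ via density, and then establish O'Donovan's condition by averaging over a spatially implemented circle action, invoking Proposition 2.1 of \cite{boyd1993faithful}. Your one genuine refinement is the treatment of the fixed point $0$ of $l\mapsto sl$: you correctly observe that $VE_0=E_0$ obstructs any diagonal (hence any) implementation of the gauge action on all of $\ell^2(\Z_{\geq 0})$, and you resolve this by passing to the reducing subspace $\ell^2(\Z_{>0})$, where $u_zE_{s^km}=z^kE_{s^km}$ works and where the coefficient algebra still acts faithfully because $\Z_{\geq 1}$ is dense in $\Z_s$. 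This is a necessary step that the paper's analogous arguments do not make explicit, and $\ker\Phi\subseteq\ker\pi_+=0$ closes the argument cleanly.

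One point needs correcting: $HS(s)$ is \emph{not} the Toeplitz algebra of the self-correspondence $\overline{\alpha_s(1)C(\Z_s)}$; the Stacey relations $V_s^*V_s=I$ and $V_sM_fV_s^*=M_{\alpha_s(f)}$ force $V_sV_s^*=M_{\alpha_s(1)}$, which is precisely the Cuntz--Pimsner covariance condition, so $HS(s)$ is the Cuntz--Pimsner algebra of that correspondence. This matters: since the concrete representation satisfies $VV^*=\mu_{\alpha_s(1)}$, it could not possibly be faithful on the (strictly larger) Toeplitz algebra, and the Fowler--Raeburn uniqueness theorem for Toeplitz algebras would not apply. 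The theorem you actually need --- and which your verification of equivariance and injectivity on $C(\Z_s)$ does supply the hypotheses for --- is the gauge-invariant uniqueness theorem for Stacey crossed products by monomorphisms with hereditary range, i.e.\ the result of \cite{boyd1993faithful} that the paper cites throughout. With that substitution the proof is complete.
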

\subsection{The Structure and $K$-Theory of $HS(s)$} 
To understand the structure of $HS(s)$ we look at the ideal, denoted $J_s$, which is introduced in the following way.
Consider the map $C(\Z_s) \to B(\ell^2(\Z))$ which sends $f \mapsto m_f$, where $m_f$ is given by  
$$m_f E_l = f(0) E_l.$$
Additionally, let $v \in B(\ell^2(\Z))$ denote the standard bilateral shift 
$$vE_l = E_{l+1}.$$ 
It is easy to check that sending the universal generators $V_s \mapsto v$ and $M_f \mapsto m_f$ gives rise to a representation of $HS(s)$ onto $C^*(v,m_f) \cong C(S^1)$. Denote this surjective representation by 
$$\pi_0: HS(s) \to C(S^1).$$ 
We define $J_s$ to be the ideal which is the kernel of this representation:
$$
J_s : = \textup{Ker } \pi_0. 
$$
We now have the following proposition regarding the structure of $J_s$. For a detailed proof of the following result, see \cite{HKMP}. 
\begin{prop}
We have the following isomorphism of C$^*$-algebras:
\begin{equation*}
J_s\cong C(\Z_s^\times)\otimes\mathcal{K}\,,
\end{equation*}
where $\Z_s^\times$ denotes the unit sphere in $\Z_s$ with respect to the $s$-adic norm. 
\end{prop}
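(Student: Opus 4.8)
The plan is to realize $J_s$ concretely inside $B(\ell^2(\Z_{\geq 0}))$ by decomposing the Hilbert space along the orbits of multiplication by $s$, and then to identify the resulting ideal with a continuous field of compact operators over the unit sphere. Every $l \geq 1$ factors uniquely as $l = s^k m$ with $k \geq 0$ and $s \nmid m$; writing $X = \{ m \geq 1 : s \nmid m\}$, this gives a bijection $\Z_{\geq 1} \cong X \times \Z_{\geq 0}$ and hence a unitary $W : \ell^2(\Z_{\geq 1}) \to \ell^2(X) \otimes \ell^2(\Z_{\geq 0})$ sending $E_{s^k m} \mapsto E_m \otimes e_k$. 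First I would verify on basis vectors that under $W$ the generators become $V \mapsto 1 \otimes S$, with $S$ the unilateral shift, $\mu_f \mapsto D_f$ where $D_f(E_m \otimes e_k) = f(s^k m)\, E_m \otimes e_k$, and the projection $P_0 := I - V V^*$ becomes $1 \otimes P_{e_0}$, the projection onto $k=0$. The vector $E_0$ is fixed by $V$ and plays no role below, since $P_0 E_0 = 0$.

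Next I would introduce the candidate ideal
$$
\mathcal{B} := \overline{\operatorname{span}}\{ V^k \mu_f P_0 (V^*)^j : f \in C(\Z_s),\ j,k \geq 0 \},
$$
and show it is a closed two-sided $*$-ideal, using only the commutation relations $\mu_g V = V \mu_{\beta_s g}$ and $V^* P_0 = 0$ (readily checked in the faithful representation on $\ell^2(\Z_{\geq 0})$) together with the fact that $P_0$ and every $\mu_f$ are diagonal, hence commute. Computing $W\big(V^k \mu_f P_0 (V^*)^j\big)W^* = M_{f|_{\Z_s^\times}} \otimes \theta_{k,j}$, where $\theta_{k,j}$ is a matrix unit and $M_\phi$ denotes multiplication by $\phi$ on $\ell^2(X)$, I would then identify $W \mathcal{B} W^*$ with $C(\Z_s^\times) \otimes \mathcal{K}$. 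Two facts make this precise: the restriction map $C(\Z_s) \to C(\Z_s^\times)$ is onto because $\Z_s^\times$ is clopen in $\Z_s$, so the symbols $f|_{\Z_s^\times}$ exhaust $C(\Z_s^\times)$; and $X$ is dense in $\Z_s^\times$, so $\phi \mapsto M_\phi$ is isometric, whence $\overline{\operatorname{span}}\{ M_\phi \otimes \theta_{k,j}\}$ is exactly $C(\Z_s^\times) \otimes \mathcal{K}$.

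It then remains to prove $\mathcal{B} = J_s$. The inclusion $\mathcal{B} \subseteq J_s$ is immediate since $\pi_0(P_0) = I - vv^* = 0$. For the reverse I would show that $\pi_0$ induces an isomorphism $HS(s)/\mathcal{B} \cong C(S^1)$. The decisive point is that $\mu_f - f(0) I \in \mathcal{B}$: in the tensor picture $\mu_f - f(0)I = \sum_{k \geq 0} M_{\phi_k} \otimes P_{e_k}$ with $\phi_k(m) = f(s^k m) - f(0)$, and since $s^k m \to 0$ uniformly for $m \in \Z_s^\times$, uniform continuity of $f$ forces $\|\phi_k\|_\infty \to 0$; as the $P_{e_k}$ are orthogonal, the series converges in norm to an element of $\mathcal{B}$. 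Hence $HS(s)/\mathcal{B}$ is generated by the single unitary image $\bar V$ (each $\bar\mu_f$ collapses to the scalar $f(0)$), so it is a commutative quotient of $C(S^1)$; since $\pi_0(\bar V) = v$ has full spectrum $S^1$, the induced map is an isomorphism and $J_s = \ker \pi_0 = \mathcal{B}$.

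The main obstacle will be this last step, and within it the verification that $\mu_f - f(0) I$ genuinely lies in $\mathcal{B}$, since that is where the orbit decomposition must be combined with the uniform estimate $\sup_{m}\lvert f(s^k m) - f(0)\rvert \to 0$ and where the fine topology of $\Z_s$ (uniform continuity, density of $X$, clopenness of $\Z_s^\times$) enters. By contrast, the identification $W\mathcal{B}W^* = C(\Z_s^\times)\otimes\mathcal{K}$ is conceptually the heart of the argument but becomes routine once the decomposition is in place.
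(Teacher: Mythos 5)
Your argument is correct, but note that for this particular proposition the paper offers no proof of its own --- it states the result and defers entirely to the reference [HKMP] --- so the only comparison available is with the analogous orbit-decomposition arguments the paper does carry out for the ideals $I_r$ in Sections 5 and 6. Your route is a clean, self-contained alternative: you exhibit the candidate ideal $\mathcal{B}=\overline{\operatorname{span}}\{V^k\mu_f P_0(V^*)^j\}$ directly inside the faithful representation on $\ell^2(\Z_{\geq 0})$ (whose faithfulness you do implicitly borrow from the cited proposition), identify it with $C(\Z_s^\times)\otimes\mathcal{K}$ via the unitary $W$ coming from the unique factorization $l=s^km$, and then prove $\mathcal{B}=J_s$ by showing the quotient is singly generated by a unitary with full spectrum. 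All the delicate points check out: $P_0E_0=0$ so the $E_0$ summand is harmless; $V^*P_0=P_0V=0$ and the diagonality of $P_0,\mu_f$ do give the ideal property; $X$ is dense in the clopen set $\Z_s^\times$ so restriction is onto and $\phi\mapsto M_\phi$ is isometric; and the tail estimate $\|\sum_{k\geq K}M_{\phi_k}\otimes P_{e_k}\|=\sup_{k\geq K}\|\phi_k\|_\infty\to 0$ (valid because the ranges are orthogonal, with the sup over $m$ controlled by continuity of $f$ at $0$ since $|s^km|_s\leq s^{-K}$) is exactly what makes $\mu_f-f(0)I\in\mathcal{B}$ and hence forces $J_s\subseteq\mathcal{B}$. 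The paper's parallel arguments instead establish faithfulness of a geometrically adapted representation via the O'Donovan condition and then read off the ideal as a $c_0$- or $C_0$-algebra of sections; your approach buys a direct, representation-theoretic identification of the kernel without invoking that machinery, at the cost of the explicit norm-convergence argument in the last step, which you have correctly identified as the crux.
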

This fact leads to the following short exact sequence for $HS(s):$
$$
0 \to C(\Z_s^\times)\otimes\mathcal{K} \to HS(s) \to C(S^1) \to 0. 
$$
The above sequence can be used to compute the $K$-Theory of $HS(s)$. We state the results here for convenience, for further details see \cite{HKMP}. 
\begin{prop}
The $K$-Theory of $HS(s)$ is given by the following: 
$$
K_0(HS(s)) \cong  C(\Z^\times_s, \Z) \textup{ and } K_1(HS(s)) \cong 0.
$$
\end{prop}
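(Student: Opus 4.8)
The plan is to feed the short exact sequence
$$0 \to C(\Z_s^\times)\otimes\mathcal{K} \to HS(s) \to C(S^1) \to 0$$
into the six-term exact sequence in $K$-theory and then to pin down the single nontrivial connecting map. First I would record the $K$-groups of the two outer terms. By stability of $K$-theory, $K_*\big(C(\Z_s^\times)\otimes\mathcal{K}\big) \cong K_*\big(C(\Z_s^\times)\big)$; since $\Z_s^\times$ is compact and totally disconnected, one gets $K_0\big(C(\Z_s^\times)\otimes\mathcal{K}\big) \cong C(\Z_s^\times,\Z)$, where the isomorphism sends a projection-valued field to its (locally constant) pointwise rank, and $K_1\big(C(\Z_s^\times)\otimes\mathcal{K}\big) \cong 0$. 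For the quotient, $K_0(C(S^1)) \cong \Z$ and $K_1(C(S^1)) \cong \Z$, the latter generated by the Bott class of the unitary $z\mapsto z$.

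Plugging these in and using that the exponential map $\delta_0$ vanishes (its target is $0$) and that $K_1$ of the ideal is $0$, the relevant exact segments become
$$\Z \xrightarrow{\ \delta_1\ } C(\Z_s^\times,\Z) \xrightarrow{\ i_*\ } K_0(HS(s)) \xrightarrow{\ q_*\ } \Z \to 0$$
and
$$0 \to K_1(HS(s)) \xrightarrow{\ q_*\ } \Z \xrightarrow{\ \delta_1\ } C(\Z_s^\times,\Z).$$
From the second segment, $q_*$ embeds $K_1(HS(s))$ onto $\ker\delta_1$, so everything hinges on the index map $\delta_1 \colon \Z \to C(\Z_s^\times,\Z)$: proving it injective will force $K_1(HS(s)) = 0$, and controlling its image will yield $K_0$.

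Next I would compute $\delta_1$ on the generator $[z]$. Under $\pi_0$ the isometry $V$ maps to the unitary generator of $C(S^1)$, so $V$ is a partial-isometry lift of $z$ and the index-map formula gives $\delta_1([z]) = [1-V^*V] - [1-VV^*] = -[1-VV^*]$, since $V^*V = I$. In the representation on $\ell^2(\Z_{\geq 0})$ the defect projection $1-VV^*$ is the projection onto $\overline{\operatorname{span}}\{E_l : s \nmid l\}$, and the indices $l$ with $s\nmid l$ are exactly the minimal elements of the orbits $\{s^k m\}_{k\geq 0}$, which parametrize a dense subset of $\Z_s^\times$; on each orbit $1-VV^*$ is the rank-one projection onto the bottom vector. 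I would therefore identify $1-VV^*$, under the isomorphism $J_s \cong C(\Z_s^\times)\otimes\mathcal{K}$, with the constant rank-one field, so that $\delta_1([z]) = -\mathbf{1}$, the negative of the constant function, a primitive and hence nonzero element. Making this identification precise requires unwinding the explicit isomorphism $J_s \cong C(\Z_s^\times)\otimes\mathcal{K}$ from \cite{HKMP}, and this is where I expect the genuine work to lie.

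Finally I would assemble the conclusion. Injectivity of $\delta_1$ gives $K_1(HS(s)) \cong \ker\delta_1 = 0$. For $K_0$, the first segment yields a short exact sequence $0 \to C(\Z_s^\times,\Z)/\operatorname{im}(\delta_1) \to K_0(HS(s)) \to \Z \to 0$, which splits because $\Z$ is free. Writing $\Z_s^\times = \varprojlim U_N$ exhibits $C(\Z_s^\times,\Z) = \varinjlim \Z^{\,|U_N|}$ with split-injective bonding maps, so $C(\Z_s^\times,\Z)$ is a free abelian group of countably infinite rank; since $\operatorname{im}(\delta_1) = \Z\cdot\mathbf{1}$ is generated by a primitive element, the quotient $C(\Z_s^\times,\Z)/\operatorname{im}(\delta_1)$ is again free of countable rank, whence $\big(C(\Z_s^\times,\Z)/\operatorname{im}(\delta_1)\big)\oplus\Z \cong C(\Z_s^\times,\Z)$. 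This gives $K_0(HS(s)) \cong C(\Z_s^\times,\Z)$ and completes the argument.
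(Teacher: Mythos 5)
Your proposal is correct and follows exactly the route the paper intends: the paper states this proposition without proof, deferring to \cite{HKMP}, but it sets up precisely the short exact sequence $0 \to C(\Z_s^\times)\otimes\mathcal{K} \to HS(s) \to C(S^1) \to 0$ as the computational tool, and your six-term argument --- with the index map sending the generator of $K_1(C(S^1))$ to $-[1-VV^*]_0 = -\mathbf{1}$, forcing $K_1(HS(s)) = 0$ and $K_0(HS(s)) \cong \bigl(C(\Z_s^\times,\Z)/\Z\mathbf{1}\bigr)\oplus\Z \cong C(\Z_s^\times,\Z)$ --- is the standard way to carry it out. The single step you flag as deferred (identifying $1-VV^*$, the projection onto the bottom vectors of the orbits $\{s^k m\}_{k\ge 0}$, with the constant rank-one field under the isomorphism $J_s \cong C(\Z_s^\times)\otimes\mathcal{K}$) is indeed correct, and your free-abelian-group bookkeeping at the end is sound.
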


The Hensel-Steinitz algebras play a key role in Section \ref{Case III}.

\section{Structure of the Crossed Product: Case I}
\label{Case I}
We study the structure of $A_r$ via an ideal, denoted by $I_r$, which gives rise to the following short exact sequence: 
$$
0 \to I_r \to A_r \to C(S^1) \to 0.
$$
We begin this section by introducing the ideal and corresponding short exact sequence. Then, after demonstrating a number of irreducible, infinite dimensional representations of $A_r$, we obtain a faithful representation of $A_r$ on the Hilbert space $\ell^2(\mathbb{Z} \times \mathbb{Z}_{\geq 0} \times U_{N_r} / G_{r,N_r})$, where $N_r$ is as in Corollary \ref{order corollary}. Using this faithful representation, we describe the ideal $I_r$ in terms of familiar C$^*$-algebras. This description allows the $K$-Theory of $I_r$ to be computed via the K\"unneth formula, which in turn allows the $K$-Theory of $A_r$ to be computed via the $6$-term exact sequence in $K$-Theory. 
Throughout this section, $r$ is used to denote an element of $\Z_p^\times$ which is not a root of unity.
\subsection{Representations of $A_r$}
Let $\{ E_k\}_{k\in \Z}$ denote the canonical basis for $\ell^2(\Z)$, and consider the representation $\pi_0: A_r \to B(\ell^2(\Z))$ given by the following:
\begin{equation*}
\begin{aligned}
&V_r \mapsto v &&\textrm{ where }\quad v E_k = E_{k+1} \\
&M_f \mapsto m_f &&\textrm{ where }\quad m_f E_k = f(0) E_k.
\end{aligned}
\end{equation*}
Denote the kernel of this representation by $I_r : = \textup{Ker} \,\pi_0$. From the same reasoning as in Section \ref{Hensel}, it is clear that the image of $\pi_0$ can be identified with $C(S^1)$, and we have the following proposition. 
\begin{prop}
We have the following short exact sequence: 
\begin{equation*}
    0 \to I_r \to A_r \to C(S^1) \to 0.
\end{equation*}
\end{prop}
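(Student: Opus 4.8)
The plan is to invoke the universal property of $A_r$ to produce $\pi_0$ as a genuine $*$-homomorphism, then identify its image directly, after which the exact sequence is a formality. First I would verify that the assignment $V_r \mapsto v$, $M_f \mapsto m_f$ respects the two defining relations of $A_r$. Since $v$ is the bilateral shift on $\ell^2(\Z)$ it is unitary, so $v^*v = I$ (indeed $v^*v = vv^* = I$) reproduces the relation $V_r^*V_r = I$. The covariance relation $V_r M_f V_r^* = M_{\alpha_r(f)}$ requires checking that $v\, m_f\, v^* = m_{\alpha_r(f)}$: a direct computation gives $v\, m_f\, v^* E_k = f(0) E_k$ for every $k$, while $m_{\alpha_r(f)} E_k = (\alpha_r f)(0)\, E_k$, and these agree because $r \mid 0$ forces $(\alpha_r f)(0) = f(0/r) = f(0)$. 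By universality of $A_r$, the assignment then extends to a $*$-homomorphism $\pi_0 \colon A_r \to B(\ell^2(\Z))$.

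Next I would pin down the image. The key observation is that $m_f = f(0)\, I$ for every $f \in C(\Z_p)$, so the multiplication operators contribute only scalar multiples of the identity and $\operatorname{Im}\pi_0 = C^*(v, m_f) = C^*(v)$. Since $v$ is a unitary whose spectrum is the full unit circle, the Gelfand transform --- equivalently, passing to the Fourier model $\ell^2(\Z) \cong L^2(S^1)$ in which $v$ becomes multiplication by the coordinate function --- yields $C^*(v) \cong C(S^1)$, with $v$ mapping to the coordinate $z$. This is the same reasoning used for $HS(s)$ in Section \ref{Hensel}, and it shows that $\pi_0$ is a surjection onto a copy of $C(S^1)$.

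Finally, with $\pi_0$ established as a surjection onto $C(S^1)$ and $I_r := \operatorname{Ker}\pi_0$ by definition, the image of a $*$-homomorphism is automatically a C$^*$-subalgebra, so the first isomorphism theorem gives $A_r / I_r \cong C(S^1)$; this is exactly the asserted sequence $0 \to I_r \to A_r \to C(S^1) \to 0$, with exactness at $I_r$ (injectivity of the inclusion) and at $A_r$ (image of the inclusion equals $\operatorname{Ker}\pi_0$) being immediate from the definition of the kernel. I expect no serious obstacle here: the only step demanding genuine care is the verification of the covariance relation, where one must use the identity $(\alpha_r f)(0) = f(0)$ coming from $r \mid 0$.
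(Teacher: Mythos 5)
Your proposal is correct and follows essentially the same route as the paper, which simply defines $\pi_0$ on generators and appeals to ``the same reasoning as in Section \ref{Hensel}'' to identify the image with $C(S^1)$; you have merely written out the details (verification of the covariance relation via $(\alpha_r f)(0)=f(0)$, the observation that $m_f=f(0)I$ so the image is $C^*(v)\cong C(S^1)$, and the first isomorphism theorem) that the paper leaves implicit.
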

There are a number of important representations which are related to the orbits $\{r^kx : k \in \Z \}$. We summarize these representations and characterize their images in the following proposition. 
\begin{prop}
\label{sphere rep}
Let $r \in \Z_p^\times$ and assume $r$ is not a root of unity. Let $N_r$ be as defined in Propositions \ref{order proposition} and \ref{group proposition}. For any fixed nonzero $x \in \Z_p$, the map $\pi_x: A_r \to B(\ell^2(\Z)) $ defined by 
\begin{equation*}
\begin{aligned}
    &V_r \mapsto v  &&\textrm{ where } \quad v E_k = E_{k+1} \\
    &M_f \mapsto M_{r,x}(f) &&\textrm{ where } \quad M_{r,x}(f) E_k = f(r^kx)E_k 
\end{aligned}
\end{equation*}
defines a representation of $A_r$. Moreover, there exists an isomorphism of C$^*$-algebras: 
$$ \pi_x (A_r) \cong \textup{BD}( \ord_{\Z^\times_p} (r)),$$ 
where $\textup{BD}(S)$ denotes the Bunce-Deddens algebra with supernatural number $S$, and $\ord_{\Z^\times_p} (r)$ denotes the supernatural number given by $\textup{lcm} \{ \ord_{U_N}(r) : N \geq 1\}$. 
\end{prop}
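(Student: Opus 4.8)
The plan is to treat the two assertions in turn: that $\pi_x$ is a well-defined $*$-representation, and then that its image is the asserted Bunce-Deddens algebra. For the first, I would invoke the universal property of $A_r$, so that it suffices to produce operators on $\ell^2(\Z)$ satisfying $V_r^*V_r = I$ and $V_r M_f V_r^* = M_{\alpha_r(f)}$. The assignment $f \mapsto M_{r,x}(f)$ is a unital $*$-homomorphism of $C(\Z_p)$, being pointwise evaluation along the orbit $\{r^k x\}$ followed by diagonal multiplication, and the bilateral shift $v$ is unitary, so $v^*v = I$. The only relation needing computation is covariance: since $r\in\Z_p^\times$ we have $(\alpha_r f)(y) = f(r^{-1}y)$ for all $y$, and evaluating on basis vectors gives $v\, M_{r,x}(f)\, v^* E_k = f(r^{k-1}x)E_k = (\alpha_r f)(r^k x)E_k = M_{r,x}(\alpha_r f)E_k$. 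Hence the relations hold and $\pi_x$ extends to a representation.

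For the second assertion, the key observation is that whenever $f$ factors through $\Z/p^N\Z$ (i.e. $f$ is locally constant), the diagonal symbol $k\mapsto f(r^k x)$ is a \emph{periodic} function on $\Z$. Writing $x = p^m u$ with $u\in\Z_p^\times$ and $m=\ord_p(x)$, one checks for $N>m$ that $r^T x \equiv x \bmod p^N$ if and only if $r^T \equiv 1 \bmod p^{N-m}$, that is, if and only if $d_{N-m}\mid T$, where $d_j = \ord_{U_j}(r)$. Thus the symbol has minimal period exactly $d_{N-m}$, and the same cancellation shows $k\mapsto r^k x \bmod p^N$ is injective on $\{0,\dots,d_{N-m}-1\}$, hence restricts to a bijection onto its orbit on a single period. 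This is where Corollary \ref{order corollary} and Proposition \ref{group corollary} enter: the periods $\{d_{N-m}:N\geq 1\}$ are cofinal in $\{d_N\}$, so their least common multiple is the supernatural number $\ord_{\Z_p^\times}(r) = \textup{lcm}\{\ord_{U_N}(r)\}$.

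The injectivity on one period is precisely what makes the image large enough. Given any $d_{N-m}$-periodic function $\phi$ on $\Z$, I would define $f$ on the orbit by $f(r^k x) = \phi(k)$ (well defined by the bijection), extend by zero off the orbit to obtain a locally constant $f\in C(\Z_p)$, and observe that $M_{r,x}(f)$ is then exactly the diagonal multiplication operator $M_\phi$. Conversely, every $M_{r,x}(f)$ with $f$ locally constant is a periodic diagonal operator of this form, and locally constant functions are dense in $C(\Z_p)$. Therefore $\pi_x(A_r)$ is the closed $*$-algebra on $\ell^2(\Z)$ generated by the bilateral shift together with all multiplication operators periodic of period $d_N$ for some $N$, which is the standard faithful representation of the Bunce-Deddens algebra with supernatural number $\textup{lcm}\{d_N\} = \ord_{\Z_p^\times}(r)$ (cf. \cite{klimek2021aspects}). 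This identifies the image as claimed.

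I expect the main obstacle to be the surjectivity step, namely verifying that one recovers \emph{all} periodic multiplication operators of a given period rather than only some of them; this rests on the orbit of $x$ meeting each of its points exactly once per period, which is in turn the order-growth behavior established in Corollary \ref{order corollary} and Proposition \ref{group corollary}. The only additional bookkeeping is tracking the valuation $m=\ord_p(x)$ when $x$ is not a unit, but since this merely shifts the indexing of the periods without altering their least common multiple, the resulting Bunce-Deddens algebra is independent of the choice of nonzero $x$.
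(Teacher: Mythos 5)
Your proof is correct and follows essentially the same route as the paper: locally constant functions of period $p^N$ produce periodic diagonal symbols whose periods are the orders $\ord_{U_j}(r)$, and density of such functions identifies $\pi_x(A_r)$ with the Bunce--Deddens algebra of the supernatural number $\mathrm{lcm}\{\ord_{U_N}(r)\}$. Your explicit check that \emph{every} periodic diagonal operator is attained --- via the injectivity of $k\mapsto r^kx \bmod p^N$ on a single period, with the valuation $m=\ord_p(x)$ tracked so that the periods $d_{N-m}$ are cofinal in $\{d_N\}$ --- supplies a surjectivity detail that the paper's proof passes over silently, but it is a refinement of the same argument rather than a different approach.
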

\begin{proof}
It is clear that $\pi_x$ defines a representation of $A_r$. To see that the image can be identified with a Bunce-Deddens algebra, consider a locally constant (\textit{i.e.,} periodic) function $f$ on $\Z_p$ of period $p^N$. We examine the period of the sequence $k \mapsto f(r^k x)$. Note that by definition $|G_{r,N}|$ is the smallest number such that $r^{k+|G_{r,N}|} \equiv r^k (\textup{mod } p^N)$ for any $k$. Hence $r^{|G_{r,N}|} - 1$ is divisible by $p^N$, and therefore so is $xr^k(r^{|G_{r,N}|} - 1)$. It follows that for any $k$ we have $f(x r^k) = f(x r^{k + |G_{r,N}|})$ and that the sequence $k \mapsto f(r^k x)$ has period $|G_{r,N}|$. 
Since periodic functions of arbitrarily large period are dense in $C(\Z_p)$, by the Stone-Weierstrass Theorem, it follows that $\pi_x(A_{r})$ is generated by the unitary shift operator $v$, along with periodic diagonal operators of period $|G_{r,N}|$, for any $N \geq 1$. Hence it follows from the definition of Bunce-Deddens algebras that $\pi_x(A_{r}) \cong \textup{BD}( \ord_{\Z_p^\times}(r))$. 
\end{proof}
\begin{rem}
We note that since $|r|_p = 1$, the orbit $\{ x r^k : k \in \Z\}$ lies in the $p$-adic sphere with radius $|x|_p$, and hence the representation $\pi_x(f)$ depends only on the values of $f$ on that sphere. More specifically, $\pi_x(f)$ depends only on $f$ restricted to $\overline{\{ r^kx : k \in \Z \}}$. 
\end{rem}

\subsection{The Structure of $I_r$ and $A_r$}
Using the decomposition of $\Z_p$ into orbits of $G_r$, we have the following theorem describing a faithful representation of $A_r$.
\begin{theo}\label{ideal_structure}
      Suppose $r \in \Z_p^\times$ and assume that $r$ is not a root of unity. Let $\gamma : \Z^\times_p / G_r \to \Z^\times_p$ denote a section of the quotient map $\Z_p^\times \to \Z_p^\times / G_r$. The following map defines a faithful representation of $A_r:$ 
      $$
      \pi : A_r \to B(\ell^2(\Z \times \Z_{\geq 0} \times \Z_p^\times / G_r)),
      $$ 
      given by 
    \begin{equation*}
\begin{aligned}
        &V_r \mapsto v_r&&\textrm{ where } \quad v_r \phi(k,L,xG_r) = \phi(k-1,L,xG_r)  \\
        &M_f \mapsto \mathcal{M}_{f}&&\textrm{ where } \quad \mathcal{M}_f \phi(k,L,xG_r) = f(r^k p^L \cdot \gamma(xG_r)) \phi(k,L,xG_r). 
\end{aligned}   
\end{equation*}
\end{theo}
\begin{proof}
First note that 
$$
v_r^*\mathcal{M}_f v_r \phi(k,L,xG_r) = f(r^{k+1}p^L \gamma(xG_r)) \phi(k,L,xG_r) = \mathcal{M}_{\beta_r(f)} \phi(k,L,xG_r),
$$
so the relations are preserved. Before proceeding, we remark that since the quotient $\Z^\times_p / G_r$ is discrete, by Theorem \ref{discrete quotient}, the map $\gamma : \Z^\times_p / G_r \to \Z_p^\times$ is automatically continuous. Additionally, the representation is faithful on the subalgebra $C^*(\mathcal{M}_f : f \in C(\Z_p))$. Indeed, suppose that $f(r^k p^L \gamma(xG_r)) = 0$ for all $L,k, x$. We show that $f$ must vanish on each sphere. Fix $\varepsilon>0$. We wish to show that for any $x \in \Z_p^\times$, $|f(xp^L)| \leq \varepsilon$. Since $f$ is continuous, choose $\delta$ so that $|f(y) - f(p^Lx)|_p < \varepsilon$ whenever $|x-y|_p < \delta$.  Note that for any $x \in \Z_p^\times$, there exists $g$ in $G_r$ such that $x = \gamma(xG_r) \cdot g$. Since $g \in G_r$, there is $k$ such that $|g - r^k|_p < \delta$. Note that 
$$
|p^L x - p^L r^k \gamma(xG_r)| = |p^L \gamma(xG_r) \cdot g - p^L r^k \gamma(xG_r)|_p = p^{-L}|g - r^k|_p < \delta,
$$
and so 
$$
| f(p^L x) - f(p^L r^k \gamma(xG_r)|_p = | f(p^L x) |_p < \varepsilon. 
$$
Hence, $f$ vanishes on each sphere and so $f = 0$. This shows the representation is faithful on $C^*(\mathcal{M}_f : f \in C(\Z_p))$. 

To see that the representation is faithful, we use the adaptation of the O'Donovan condition to crossed products by endomorphism, as described in \cite{boyd1993faithful}; see also the appendix to \cite{HKMP}. Define a one-parameter group of unitaries
$$U_r(\theta) : \ell^2(\Z \times \Z_{\geq 0} \times \Z_p^\times / G_r) \to \ell^2(\Z \times \Z_{\geq 0} \times \Z_p^\times / G_r)$$ by 
$$
U_r(\theta) \phi(k,L,xG_r) := e^{2 \pi i k \theta} \phi(k,L,xG_r). 
$$
Next, for $a \in C^*(v_r, \mathcal{M}_f)$, we define $\rho_\theta(a)$ by
$$
\rho_\theta (a) : = U_r(\theta) a U_r(-\theta). 
$$
Note that on generators of $A_r$ morphisms $\rho_\theta$ act as follows:
$$
\rho_\theta(v_r)  = e^{2 \pi i \theta} v_r, 
$$
and 
$$
\rho_\theta(\mathcal{M}_f) = \mathcal{M}_f,
$$
so $\rho_\theta$ is a continuous one-parameter group of automorphisms of $A_r$ with invariant subalgebra given by $C^*(\mathcal{M}_f : f \in C(\Z_p))$. 

Consider an expectation defined by
$$
\mathbb{E}(a) := \int_0^1 \rho_\theta(a) d\theta.
$$
Clearly we have: 
$$
\| \mathbb{E}(a) \| \leq \int_0^1 \| U_r(\theta) \| \|a \| \|U_r(-\theta) \| d \theta \leq \|a\|. 
$$
Given $a$ of the form $a = \sum_{n \in \Z} v_r^n \mathcal{M}_{f_n}$, we see that
$$E(a)=\mathcal{M}_{f_0},
$$
and so, combining the two observations we obtain
$$
\|\mathcal{M}_{f_0} \| \leq \|a \|. 
$$
Hence, the representation satisfies O'Donovan's conditions and is therefore faithful. This completes the proof. 
\end{proof}
In light of this representation, we can easily deduce the structure of $I_r$. 
\begin{prop}
\label{I_r structure}
Suppose that $r \in \Z_p^\times$ is not a root of unity. There exists an isomorphism of C$^*$-algebras:
$$
I_r \cong c_0(\Z_{\geq 0}) \otimes \textup{BD}( \ord_{\Z^\times_p} (r)).
$$ 
\end{prop}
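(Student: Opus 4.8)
The plan is to work inside the faithful representation $\pi$ of Theorem \ref{ideal_structure} and to realize $I_r$ concretely as $\pi(I_r)\subseteq B(\ell^2(\Z\times\Z_{\geq 0}\times\Z_p^\times/G_r))$. The key structural observation is that the radius variable $L$ supplies a family of central projections. For each $L\geq 0$ the sphere $S_L=\{x\in\Z_p:|x|_p=p^{-L}\}$ is clopen, so $\chi_{S_L}\in C(\Z_p)$, and since every point $r^kp^L\gamma(xG_r)$ has $p$-adic norm $p^{-L}$, the operator $Q_L:=\mathcal{M}_{\chi_{S_L}}$ is exactly the orthogonal projection onto $\ell^2(\Z\times\{L\}\times\Z_p^\times/G_r)$. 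First I would verify that $Q_L$ commutes with $v_r$ (which preserves $L$) and with every $\mathcal{M}_f$ (diagonal in $L$), hence is central in $\pi(A_r)$; consequently every element of $\pi(A_r)$ is block-diagonal for the decomposition $\mathcal{H}=\bigoplus_L Q_L\mathcal{H}$, and $\sum_L Q_L=I$ strongly.

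Next I would identify each block $Q_L\pi(A_r)Q_L$. Since $\Z_p^\times/G_r$ is finite of some cardinality $m$ by Theorem \ref{discrete quotient}, I would refine the $L$-block using orbit projections: for each coset $cG_r$ the set $C_{L,c}:=p^L\cdot cG_r$ is clopen in $\Z_p$, and $P_{L,c}:=\mathcal{M}_{\chi_{C_{L,c}}}$ is the projection onto $\ell^2(\Z\times\{L\}\times\{cG_r\})$. These $P_{L,c}$ are again central, and the compression of $\pi$ to the summand indexed by $(L,cG_r)$ is precisely the representation $\pi_{p^L\gamma(cG_r)}$ of Proposition \ref{sphere rep}. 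Hence $P_{L,c}\pi(A_r)P_{L,c}\cong\textup{BD}(\ord_{\Z_p^\times}(r))$, and summing over the finitely many cosets gives $Q_L\pi(A_r)Q_L\cong\textup{BD}(\ord_{\Z_p^\times}(r))^{\oplus m}$.

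The heart of the argument, and the step I expect to be the main obstacle, is to prove that $\pi(I_r)$ is exactly the ideal $\mathcal{I}:=\{a\in\pi(A_r):\|Q_La\|\to 0\text{ as }L\to\infty\}$, which by centrality of the $Q_L$ is closed and equals the $c_0$-direct sum $\bigoplus_L Q_L\pi(A_r)Q_L$. The crucial estimate is that $\lim_{L\to\infty}\|Q_L\pi(a)\|=\|\pi_0(a)\|$ for every $a\in A_r$. To establish it I would first treat a single orbit: in $\pi_{p^L\gamma(cG_r)}$ the operator $M_f$ becomes the diagonal $(f(r^kp^L\gamma(cG_r)))_k$, and since $|r^kp^L\gamma(cG_r)|_p=p^{-L}\to 0$ uniformly in $k$, uniform continuity of $f$ on the compact space $\Z_p$ forces this diagonal to converge in norm to $f(0)I=\pi_0(M_f)$, while the shift is unchanged; thus $\pi_{p^L\gamma(cG_r)}(a)\to\pi_0(a)$ in norm for every $a$. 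Taking the maximum over the finitely many cosets yields $\|Q_L\pi(a)\|=\max_{c}\|\pi_{p^L\gamma(cG_r)}(a)\|\to\|\pi_0(a)\|$. Granting this, the induced map $\pi(A_r)/\mathcal{I}\to C(S^1)$ sending the class of $\pi(a)$ to $\pi_0(a)$ is isometric, because the quotient norm modulo a $c_0$-direct sum is $\limsup_L\|Q_L\pi(a)\|$; it is therefore an isomorphism onto $\pi_0(A_r)\cong C(S^1)$. Hence $\ker\pi_0=\pi^{-1}(\mathcal{I})$, that is $\pi(I_r)=\mathcal{I}$.

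Finally I would assemble the pieces: $I_r\cong\mathcal{I}\cong\bigoplus_L\textup{BD}(\ord_{\Z_p^\times}(r))^{\oplus m}$ as a $c_0$-direct sum, and since a $c_0$-direct sum over the countable set $\Z_{\geq 0}\times\{1,\dots,m\}$ is isomorphic to one over $\Z_{\geq 0}$, this collapses to $c_0(\Z_{\geq 0})\otimes\textup{BD}(\ord_{\Z_p^\times}(r))$, as claimed. The only genuinely delicate point is the uniform-in-$k$ norm convergence above; it is exactly there that both the continuity of $f$ and the finiteness of $\Z_p^\times/G_r$ (so that only finitely many orbits must be controlled at once) are indispensable.
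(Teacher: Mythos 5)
Your argument is correct, and at the structural level it rests on the same decomposition the paper uses: $\Z_p\setminus\{0\}$ is partitioned into $G_r$-orbits indexed by $\Z_{\geq 0}\times\Z_p^\times/G_r$, each orbit contributes one copy of $\textup{BD}(\ord_{\Z_p^\times}(r))$ via Proposition \ref{sphere rep}, and finiteness of $\Z_p^\times/G_r$ (Theorem \ref{discrete quotient}) keeps the number of blocks per sphere finite. Where you genuinely diverge is in how you identify $I_r=\ker\pi_0$ with the $c_0$-direct sum. The paper argues from generators: it notes $\mathcal{M}_f\in I_r$ iff $f(0)=0$, identifies $\{f\in C(\Z_p):f(0)=0\}$ with $c_0(\Z_{\geq 0}\times\Z_p^\times/G_r,\,C(G_r))$, and then asserts that the C$^*$-algebra generated by the $v_r^n\mathcal{M}_f$ with $f(0)=0$ is the $c_0$-sum of the fiber algebras $C(G_r)\rtimes_{\alpha_r}\Z\cong\textup{BD}$. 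You instead introduce the central projections $Q_L$ and $P_{L,c}$ and prove the quantitative estimate $\|Q_L\pi(a)\|\to\|\pi_0(a)\|$, from which $\pi(I_r)=\mathcal{I}$ follows by the quotient-norm computation. This buys you something the paper leaves implicit: the nontrivial inclusion $\ker\pi_0\subseteq C^*(v_r^n\mathcal{M}_f:f(0)=0)$, which the paper's ``consequently'' passes over (it is usually handled with the conditional expectation), is exactly what your isometry $\pi(A_r)/\mathcal{I}\to C(S^1)$ delivers. Two small points are worth a sentence each in a final write-up: the convergence $\pi_{p^L\gamma(cG_r)}(a)\to\pi_0(a)$ is checked on finite sums $\sum_n v^nM_{f_n}$ and extends to all $a$ by density together with the uniform bound $\|\pi_x\|\le 1$; and $Q_L\pi(A_r)$ really is the full sum $\bigoplus_c P_{L,c}\pi(A_r)$ rather than merely the image of $a\mapsto(P_{L,c}\pi(a))_c$, because each $P_{L,c}=\pi(M_{\chi_{C_{L,c}}})$ lies in $\pi(A_r)$, so the blocks can be varied independently.
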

\begin{proof}
Notice that since $\pi_0(I_r) = 0$ by definition,   $\mathcal{M}_f \in I_r$ if and only if  $f(0)=0$ for $f\in C(\Z_p)$. Consequently, $I_r$ can be described as follows:
\begin{equation*}
I_r=C^*(v_r^n\mathcal{M}_f : n\in \Z, f \in C(\Z_p), f(0)=0).
\end{equation*}

Next, we decompose $\Z_p$ into orbits of $G_r$. Any nonzero $x\in\Z_p$ can be uniquely written as a product:
$$x=p^L \, \gamma(jG_r)\,g,$$ 
where $L\in \Z_{\geq 0}$, $g\in G_r$, and $jG_r\in \Z_p^\times/G_r$.

If $f(0)=0$ and $f\in C(\Z_p)$, then by continuity we get:
\begin{equation*}
\lim_{L\to\infty}f(p^L \, \gamma(jG_r)\,g)=0.
\end{equation*}
This limit is uniform in $j$ and $g$ because $\Z_p^\times/G_r$ is finite and $G_r$ is compact. Finiteness of $\Z_p^\times/G_r$ also implies  that $G_r$ is open. It follows from the Stone-Weierstrass Theorem that we can identify the space of continuous functions on $\Z_p$ vanishing at zero with the space of sequences indexed by $\Z_{\geq 0} \times \Z_p^\times/G_r$ and converging to $0$ with values in $C(G_r)$:
\begin{equation*}
c_0(\Z_{\geq 0} \times \Z_p^\times/G_r, C(G_r)).
\end{equation*}
Notice that in the above identification, the action of $\alpha_r$ is only on $C(G_r)$. 

Denote  by $\widehat\pi$ the representation of $C(G_r) \rtimes_{\alpha_r} \Z$ in $\ell^2(\Z)$ generated $\widehat M(f)$, $f\in C(G_r)$ and by $v$, defined by 
\begin{equation*}
\begin{aligned}
vE_k &= E_{k+1}  \\
\widehat M(f) E_k &= f(r^k)E_k.  
\end{aligned}
\end{equation*}
Notice that $\widehat\pi$ has image $*$-isomorphic with the Bunce-Deddens algebra $\textup{BD}(\ord_{\Z^\times_p} (r))$ by Proposition \ref{sphere rep} and thus $C(G_r) \rtimes_{\alpha_r} \Z\cong \textup{BD}(\ord_{\Z^\times_p} (r))$.

It is  convenient to identify the Hilbert space of the representation $\pi$ as follows:
\begin{equation*}
\ell^2(\Z \times \Z_{\geq 0} \times \Z_p^\times / G_r)\cong \ell^2(\Z_{\geq 0} \times \Z_p^\times / G_r, \ell^2(\Z)).
\end{equation*}
With this identification the operator $v_r$ becomes the bilateral shift $v$ acting on values of sequences in $\ell^2(\Z_{\geq 0} \times \Z_p^\times / G_r, \ell^2(\Z))$.

Now $I_r$ can be identified with the algebra of sequences indexed by $\Z_{\geq 0} \times \Z_p^\times/G_r$ and converging to $0$ with values in $\textup{BD}(\ord_{\Z^\times_p} (r))$ :
\begin{equation}
\label{c0bd}
c_0(\Z_{\geq 0} \times \Z_p^\times/G_r, \textup{BD}(\ord_{\Z^\times_p} (r))).
\end{equation}
Indeed, this algebra acts faithfully in $\ell^2(\Z_{\geq 0} \times \Z_p^\times / G_r, \ell^2(\Z))$ by acting pointwise on values of sequences in $\ell^2(\Z_{\geq 0} \times \Z_p^\times / G_r, \ell^2(\Z))$:
\begin{equation*}
\phi(L,jG_r)\mapsto \widehat\pi(F(L,jG_r))\phi(L,jG_r),
\end{equation*}
where, for every $L$ and $jG_r$, we have $\phi(L,jG_r)\in \ell^2(\Z)$ and $F(L,jG_r)\in \textup{BD}(\ord_{\Z^\times_p} (r))$.
Clearly, the algebra generated by those operators, by above identifications, coincides with the algebra generated by $v_r^n\mathcal{M}_f$ where $n\in \Z$ and $f \in C(\Z_p)$ with $f(0)=0$.

The above algebra, equation (\ref{c0bd}), can  be identified with $c_0(\Z_{\geq 0} \times \Z_p^\times/G_r) \otimes \textup{BD}(\ord_{\Z^\times_p} (r))$. Since $\Z_{\geq 0} \times \Z_p^\times/G_r$ is countable, it is  bijective with $\Z_{\geq 0}$ and we get 
$$c_0(\Z_{\geq 0} \times \Z_p^\times/G_r)\cong c_0(\Z_{\geq 0}).$$
This completes the proof. 
\end{proof}
\subsection{\textit{K}-Theory}
The above results allow the $K$-Theory of both $I_r$ and $A_r$ to be computed with existing tools. First, we compute the $K$-Theory of $I_r$ using Proposition \ref{I_r structure} along with the K\"unneth formula. We need the following additive subgroup of $\Q$:
\begin{equation}
\label{Hs}
H_S : =\left \{ \frac{k}{l} \in \Q : k\in\Z, l | S\right\}
\end{equation}
for a supernatural number $S$.

\begin{prop}\label{kunn_prop}
Suppose $r \in \Z_p^\times$ is not a root of unity. The $K$-Theory of $I_r$ is given by 
$$
K_0(I_r) \cong c_0\left(\Z_{\geq 0}, H_{\ord_{\Z^\times_p} (r)}\right) \textrm{ and } K_1(I_r) \cong c_0(\Z_{\geq 0}, \Z)
$$
where $H_{\ord_{\Z^\times_p} (r)}$ is the $K_0$-group of the Bunce-Deddens algebra with supernatural number $\ord_{\Z^\times_p} (r)$, as defined by equation (\ref{Hs}) as well as \cite{klimek2021aspects}. Here  the groups $c_0(\Z_{\geq 0}, \Z)$ as well as $c_0\left(\Z_{\geq 0}, H_{\ord_{\Z^\times_p} (r)}\right)$ denote sequences that are eventually zero with values in $\Z$ and $H_{\ord_{\Z^\times_p} (r)}$ respectively.
\end{prop}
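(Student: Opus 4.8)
The plan is to combine the structural result of Proposition \ref{I_r structure}, which identifies $I_r \cong c_0(\Z_{\geq 0}) \otimes \textup{BD}(\ord_{\Z^\times_p}(r))$, with the K\"unneth formula for C$^*$-algebra $K$-theory. Writing $S := \ord_{\Z^\times_p}(r)$ for brevity, the two inputs I need are the $K$-groups of each tensor factor. For the Bunce-Deddens factor these are already recorded: $K_0(\textup{BD}(S)) \cong H_S$ and $K_1(\textup{BD}(S)) \cong \Z$, by the description of $H_S$ in equation (\ref{Hs}) together with \cite{klimek2021aspects}. The remaining input is the $K$-theory of $c_0(\Z_{\geq 0})$.

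For the latter I would observe that $c_0(\Z_{\geq 0}) \cong \bigoplus_{n \in \Z_{\geq 0}} \C$, a $c_0$-direct sum of copies of $\C$, and that $K$-theory commutes with such countable direct sums, since it is an inductive limit of the finite direct sums $\C^n$ and $K$-theory is continuous under inductive limits. This gives $K_0(c_0(\Z_{\geq 0})) \cong c_0(\Z_{\geq 0}, \Z)$, the group of eventually-zero integer sequences, together with $K_1(c_0(\Z_{\geq 0})) = 0$.

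Next I would feed these into the K\"unneth short exact sequence
$$0 \to \bigoplus_{i+j=n} K_i(c_0(\Z_{\geq 0})) \otimes K_j(\textup{BD}(S)) \to K_n(I_r) \to \bigoplus_{i+j=n+1} \operatorname{Tor}(K_i(c_0(\Z_{\geq 0})), K_j(\textup{BD}(S))) \to 0,$$
which applies because both factors are nuclear and lie in the bootstrap class. Since $K_*(c_0(\Z_{\geq 0}))$ is concentrated in degree $0$ and there equals the free abelian group $c_0(\Z_{\geq 0}, \Z)$, every $\operatorname{Tor}$ term vanishes and the sequence collapses to isomorphisms $K_n(I_r) \cong c_0(\Z_{\geq 0}, \Z) \otimes K_n(\textup{BD}(S))$. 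Substituting the two cases yields $K_0(I_r) \cong c_0(\Z_{\geq 0}, \Z) \otimes H_S$ and $K_1(I_r) \cong c_0(\Z_{\geq 0}, \Z) \otimes \Z \cong c_0(\Z_{\geq 0}, \Z)$.

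The final step is to identify $c_0(\Z_{\geq 0}, \Z) \otimes G$ with $c_0(\Z_{\geq 0}, G)$ for an abelian group $G$; since the tensor product distributes over the direct sum $c_0(\Z_{\geq 0}, \Z) = \bigoplus_{\Z_{\geq 0}} \Z$ and $\Z \otimes G \cong G$, this is immediate and produces the claimed forms with $G = H_S$ and $G = \Z$. I expect the only point requiring genuine care to be the justification that the $\operatorname{Tor}$ terms drop out, which rests squarely on the freeness of $K_0(c_0(\Z_{\geq 0}))$, together with the bookkeeping that keeps the algebraic (eventually-zero) direct sums appearing in the $K$-groups consistent with the continuity of $K$-theory under inductive limits, rather than accidentally passing to a completed $c_0$-type object.
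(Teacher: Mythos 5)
Your proposal is correct and follows essentially the same route as the paper: both invoke Proposition \ref{I_r structure}, compute $K_*(c_0(\Z_{\geq 0}))$ from its AF/inductive-limit structure, apply the K\"unneth formula of Schochet with the $\operatorname{Tor}$ terms vanishing because $K_*(c_0(\Z_{\geq 0}))$ is (torsion-)free, and then identify $c_0(\Z_{\geq 0},\Z)\otimes G$ with $c_0(\Z_{\geq 0},G)$. No substantive differences.
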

\begin{proof}
Note first that $c_0(\Z_{\geq 0})$ is an inductive limit of finite dimensional algebras of sequences which are eventually $0$. Hence,  $c_0(\Z_{\geq 0})$ is AF and $K_1(c_0(\Z_{\geq 0})) \cong 0$ (by Exercise 8.7 in \cite{rordam2000introduction}), while 
$$
K_0(c_0(\Z_{\geq 0})) \cong c_0(\Z_{\geq 0} , \Z),
$$
which follows from Exercise 3.4 in \cite{rordam2000introduction}. The $K$-Theory of $\textup{BD}(\ord_{\Z^\times_p} (r))$ was computed in \cite{klimek2021aspects}, and is given by
$$
K_0 (\textup{BD}(\ord_{\Z^\times_p} (r))) \cong H_{\ord_{\Z^\times_p} (r)}, \textup{ and } K_1(\textup{BD}(\ord_{\Z^\times_p} (r))) = \Z.
$$

Since $\textup{BD}(\ord_{\Z^\times_p} (r))$ is in the Bootstrap Class, and the $K$-Theory of $c_0(\Z_{\geq 0})$ is torsion free, by Proposition 2.14 in \cite{schochet1982topological}, we have isomorphisms 
$$
K_0( c_0(\Z_{\geq 0}) \otimes \textup{BD}(\ord_{\Z^\times_p} (r)) ) \cong (c_0(\Z_{\geq 0}, \Z) \otimes H_{\ord_{\Z^\times_p} (r)}) \oplus ( \Z \otimes 0) \cong  c_0(\Z_{\geq 0}, \Z) \otimes H_{\ord_{\Z^\times_p} (r)},
$$
as well as
$$
K_1(c_0(\Z_{\geq 0}) \otimes \textup{BD}(\ord_{\Z^\times_p} (r)) ) \cong  (0 \otimes \Z) \oplus (c_0(\Z_{\geq 0}, \Z) \otimes \Z ) \cong c_0(\Z_{\geq 0}, \Z).
$$
Since we have the identification
\begin{equation*}
c_0(\Z_{\geq 0}, \Z) \otimes H_{\ord_{\Z^\times_p} (r)} \cong c_0\left(\Z_{\geq 0}, H_{\ord_{\Z^\times_p} (r)}\right),
\end{equation*}
Proposition \ref{I_r structure}  completes the proof. 
\end{proof}
Finally, the $K$-Theory of $A_r$ can be computed from the $6$-term exact sequence induced by the short exact sequence $$0 \to I_r \to A_r \to C(S^1) \to 0.$$ 
\begin{prop}\label{ktheory_prop}
Suppose $r \in \Z_p^\times$ is not a root of unity. The $K$-Theory of $A_r$ is given by 
$$
K_0(A_r) \cong c_0(\Z_{\geq 0}, H_{\ord_{\Z^\times_p} (r)}) \oplus \Z  \textup{ and } K_1(A_r) \cong \Z \oplus c_0(\Z_{\geq 0}, \Z).
$$
\end{prop}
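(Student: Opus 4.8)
The plan is to feed the short exact sequence $0 \to I_r \to A_r \to C(S^1) \to 0$ into the cyclic six-term exact sequence in $K$-theory. The two end terms are already in hand: Proposition \ref{kunn_prop} gives $K_0(I_r) \cong c_0(\Z_{\geq 0}, H_{\ord_{\Z^\times_p}(r)})$ and $K_1(I_r) \cong c_0(\Z_{\geq 0}, \Z)$, while the classical computation for the circle gives $K_0(C(S^1)) \cong K_1(C(S^1)) \cong \Z$, with $K_0$ generated by the class $[1]$ of the unit and $K_1$ generated by the class $[z]$ of the standard unitary generator $z \in C(S^1)$. Once these groups are substituted, the only maps in the six-term sequence that are not already understood are the two connecting homomorphisms, namely the index map $\partial_1 : K_1(C(S^1)) \to K_0(I_r)$ and the exponential map $\partial_0 : K_0(C(S^1)) \to K_1(I_r)$.

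The heart of the argument, and the step I expect to require the most care, is showing that both connecting maps vanish. For the index map, recall that in Case I the generator $V_r$ is a unitary in $A_r$, and under the quotient map $\pi_0 : A_r \to C(S^1)$ it maps precisely to the generator $z$. Thus the generator $[z]$ of $K_1(C(S^1))$ lifts to an honest unitary in $A_r$, and since the index map of a unitary that lifts to a unitary is zero, we get $\partial_1 = 0$. For the exponential map, the generator $[1]$ of $K_0(C(S^1))$ is the class of the unit projection, which lifts to the unit projection $I \in A_r$; because it lifts to a genuine projection, $\partial_0 = 0$.

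With both connecting maps trivial, the six-term sequence breaks into two short exact sequences, $0 \to K_0(I_r) \to K_0(A_r) \to K_0(C(S^1)) \to 0$ and $0 \to K_1(I_r) \to K_1(A_r) \to K_1(C(S^1)) \to 0$. Since the quotient groups $K_0(C(S^1)) \cong \Z$ and $K_1(C(S^1)) \cong \Z$ are free abelian, both sequences split, and inserting the known groups yields $K_0(A_r) \cong c_0(\Z_{\geq 0}, H_{\ord_{\Z^\times_p}(r)}) \oplus \Z$ and $K_1(A_r) \cong \Z \oplus c_0(\Z_{\geq 0}, \Z)$, as claimed.

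The only genuine obstacle is the vanishing of the connecting maps: one must pin down that $V_r$ really represents the generator of $K_1(C(S^1))$ under $\pi_0$, and that the unit of $A_r$ lifts the generator of $K_0(C(S^1))$, so that the standard principle ``lifting to a unitary (respectively a projection) forces the index (respectively exponential) map to be zero'' applies. Everything downstream of that, including the splitting, is purely formal.
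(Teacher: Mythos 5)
Your proposal is correct and follows essentially the same route as the paper: both feed the short exact sequence into the six-term sequence, kill the index map because $V_r$ is a unitary lift of the generator of $K_1(C(S^1))$ and the exponential map because the unit lifts to a projection, and then split the two resulting short exact sequences. The only cosmetic difference is that you invoke freeness of $\Z$ to split, while the paper writes down explicit right splittings $[I]_0 \mapsto [I]_0$ and $[v]_1 \mapsto [V_r]_1$; both are valid.
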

\begin{proof} 
The $6$-term exact sequence in $K$-Theory is given by 
\begin{equation*}
\begin{tikzcd}
 K_0(I_r) \arrow{r}   & K_0(A_r)  \arrow{r} & K_0(C(S^1)) \arrow{d}{\textup{ exp }}    \\
 K_1(C(S^1)) \arrow{u}{\textup{ ind }} & K_1(A_r) \arrow{l} & K_1(I_r) \arrow{l}
\end{tikzcd}
\end{equation*}
First, since $v$, the unitary in $C(S^1)$ whose class $[v]_1$ generates $K_1(C(S^1))$, lifts to a unitary $V_r \in A_r$, we have that 
$$
\textup{ind}([v]_1) = [1 - V_r^*V_r]_0 - [1 - V_rV_r^*]_0 = 0,
$$
by Proposition 9.2.4 in \cite{rordam2000introduction}. Similarly, since $[I]_0$ generates $K_0(C(S^1))$, it follows from Proposition 12.2.2 in \cite{rordam2000introduction} that 
$$
\textup{exp}([I]_0) = 0. 
$$
Hence, we can extract a short exact sequence of groups from the top row: 
$$
0 \to K_0(I_r) \to K_0(A_r) \to K_0(C(S^1)) \to 0. 
$$
It is clear this sequence is right split via the map $K_0(C(S^1)) \ni [I]_0 \mapsto [I]_0 \in K_0(A_r)$. Hence, by the Splitting Lemma 
$$
K_0(A_r) \cong  c_0(\Z_{\geq 0}, H_{\ord_{\Z^\times_p} (r)})\oplus \mathbb{Z}.
$$
Additionally, we can extract a short exact sequence of groups from the bottom row: 
$$
0 \to K_1(I_r) \to K_1(A_r) \to K_1(C(S^1)) \to 0. 
$$
This sequence is also right split via the map $K_1(C(S^1)) \ni [v]_1 \mapsto [V_r] \in K_1(A_r)$. Hence,
$$
K_1(A_r) \cong \Z \oplus c_0(\Z_{\geq 0}, \Z). 
$$
This completes the proof. 
\end{proof}
We remark that the above calculation shows that for any unital C$^*$-algebra $A$ with short exact sequence $$0 \to I \to A \to C(S^1) \to 0$$ such that the generating unitary $v \in C(S^1)$ lifts to a unitary in $A$, we have the following isomorphisms 
$$
K_0(A) \cong K_0(I) \oplus \Z, \textup{ and } K_1(A) \cong K_1(I) \oplus \Z.  
$$
This is a useful and simple result that is hard to find explicitly stated in the literature. 

\section{Structure of the Crossed Product: Case II}
\label{Case II}
We analyze here concurrently both algebras $$A'_r := C(\Z_p) \rtimes_{\alpha_r} \Z/\ord(r)\Z \quad \textrm{and} \quad A_r \cong C(\Z_p) \rtimes_{\alpha_r} \Z$$ when $r$ is of finite order $\ord(r)$. The analysis parallels analogous considerations of the previous section.

First we identify the ideals $I_r'$ and $I_r$ as kernels of the representations associated to the fixed point $0\in\Z_p$ of the map $x\mapsto px$. This was already done in the previous section for $A_r$. We proceed similarly for $A_r'$. 

Let $\{ E_k\}_{k\in \Z/\ord(r)\Z}$ be the canonical basis for $\ell^2(\Z/\ord(r)\Z)$, and consider a finite dimensional representation $\pi_0': A'_r \to B(\ell^2(\Z/\ord(r)\Z))$ given by the following:
\begin{equation*}
\begin{aligned}
&V_r \mapsto v'  &&\textup{where} \quad v' E_k = E_{k+1\text{ mod }(\ord(r))}  \\
&M_f \mapsto m_f'  &&\textup{where} \quad  m_f' E_k = f(0) E_k. 
\end{aligned}
\end{equation*}
Denote the kernel of this representation by $I'_r : = \textup{ker}\, \pi_0'$. 
We have the following short exact sequences: 
\begin{equation*}
    0 \to I_r' \to A_r' \to C(\Z/\ord(r)\Z) \to 0,
\end{equation*}
and, as before,
\begin{equation*}
    0 \to I_r \to A_r \to C(S^1) \to 0.
\end{equation*}

For any fixed nonzero $x \in \Z_p$, representations $\pi_x$ of $A_r$ related to the orbits $\{r^kx : k \in \Z \}$ were defined in Proposition \ref{sphere rep}.
Similarly, since the orbits are finite, we have the following finite dimensional representations 
 $\pi_x': A_r' \to B(\ell^2(\Z/\ord(r)\Z)) $ defined by 
\begin{equation*}
\begin{aligned}
    &V_r \mapsto V_{r}'  &&\textup{where } &V_{r}' E_k = E_{k+1\text{ mod }(\ord(r))}  \\
    &M_f \mapsto M_{r,x}'  &&\textup{where } &M_{r,x}'(f) E_k = f(r^kx)E_k  .
\end{aligned}
\end{equation*}

\begin{prop}
\label{sphere_rep_prime}
Suppose that $r \in \Z_p^\times$ is a p-adic root of unity. There exist isomorphisms of C$^*$-algebras: 
$$ \pi_x' (A_r') \cong M_{\ord(r)}(\C),$$ 
and 
$$ \pi_x (A_r) \cong C(S^1)\otimes M_{\ord(r)}(\C).$$
\end{prop}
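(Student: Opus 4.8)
The plan is to set $n := \ord(r)$ and realize both images concretely as matrix algebras, exploiting that the finite orbit $\{x, rx, \dots, r^{n-1}x\}$ consists of exactly $n$ distinct points. Distinctness is the one genuinely arithmetic input: if $r^j x = r^k x$ with $0 \le j < k \le n-1$, then $r^{k-j} = 1$ because $x \neq 0$ and $\Z_p$ is an integral domain, contradicting $\ord(r) = n$. Since $\Z_p$ is compact Hausdorff, Stone--Weierstrass (or Urysohn) lets me choose $f \in C(\Z_p)$ realizing any prescribed values $(f(x), f(rx), \dots, f(r^{n-1}x))$; in particular I can produce the indicator of any single orbit point. (The verification that $\pi_x$ and $\pi_x'$ respect the relations $V_r^*V_r = I$ and $V_r M_f V_r^* = M_{\alpha_r(f)}$ is routine and parallel to Proposition \ref{sphere rep}.)

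For the first isomorphism I would argue as follows. On $\ell^2(\Z/n\Z)$ the operators $M_{r,x}'(f)$ are diagonal with entries $f(r^k x)$, and by the remark above they exhaust the full diagonal algebra $\C^n \subset M_n(\C)$; in particular every rank-one diagonal projection lies in $\pi_x'(A_r')$. Any invariant subspace is therefore spanned by a subset of the basis vectors, and invariance under the cyclic shift $V_r'$, which permutes them transitively, forces it to be $0$ or everything. Hence $\pi_x'$ is irreducible, and since the representation space is finite dimensional, Burnside's theorem (equivalently the double commutant theorem) gives $\pi_x'(A_r') = B(\ell^2(\Z/n\Z)) \cong M_n(\C)$.

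For the second isomorphism I would use the identification $\Z \cong \Z \times \{0, \dots, n-1\}$ via $k = nm + j$ to write $\ell^2(\Z) \cong \ell^2(\Z) \otimes \C^n$. Because $r^n = 1$, the function $k \mapsto f(r^k x)$ has period $n$, so each $M_{r,x}(f)$ becomes $I \otimes \diag(f(x), \dots, f(r^{n-1}x))$; as before these fill out the constant diagonal algebra $\C^n \subset M_n(\C) \subset M_n(C(S^1))$. Letting $z$ denote the bilateral shift on the $\ell^2(\Z)$ factor, so that $C^*(z) \cong C(S^1)$, the shift $v$ takes the form of the $n \times n$ cyclic matrix over $C(S^1)$ with $1$'s on the subdiagonal and $z$ in the upper-right corner, reflecting that $v$ advances the $\C^n$-coordinate cyclically and multiplies by $z$ each time it wraps around; in particular $v^n = z \cdot I$ is central.

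The final step is a matrix-unit computation showing the image is all of $M_n(C(S^1))$, which I expect to be the main obstacle. Multiplying $v$ by the diagonal projections yields the scalar subdiagonal matrix units $v\,e_{jj} = e_{j+1,j}$ for $j < n-1$, and chaining these with their adjoints and the diagonal projections produces every scalar matrix unit, hence all of $M_n(\C)$. The remaining corner $v\,e_{n-1,n-1} = z\,e_{0,n-1}$ then gives $z\,e_{0,n-1}\cdot e_{n-1,0} = z\,e_{00}$, and multiplying by scalar matrix units spreads $z$ across every entry, so $z \cdot M_n(\C) \subset \pi_x(A_r)$. Passing to the generated C$^*$-algebra, and noting $C^*(z,\bar z) = C(S^1)$, yields $M_n(C(S^1)) \subseteq \pi_x(A_r)$, while the reverse inclusion is immediate from the matrix forms of the generators. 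Thus $\pi_x(A_r) = M_n(C(S^1)) \cong C(S^1) \otimes M_{\ord(r)}(\C)$.
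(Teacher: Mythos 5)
Your proof is correct and follows essentially the same route as the paper: irreducibility plus finite dimensionality for $\pi_x'(A_r')$, and for $\pi_x(A_r)$ a matrix-unit decomposition in which your scalar units $e_{ij}$ and central element $z\cdot I=v^n$ are exactly the paper's $P_{i,j}=v^iP_0v^{-j}$ and $u=v^{\ord(r)}$. The only addition is that you make explicit the distinctness of the orbit points $\{r^kx\}$ (via $\Z_p$ being an integral domain), a fact the paper uses implicitly when asserting that the $M_{r,x}(f)$ generate all $\ord(r)$-periodic diagonals.
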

\begin{proof} Since the representation $\pi_x'$ is irreducible, $\pi_x' (A_r')$ must be the full matrix algebra.

To see the claimed isomorphism for $\pi_x (A_r)$ 
, we observe that the algebra is generated by the following set of generators:
\begin{equation*}
u:=v^{\ord(r)},
\end{equation*}
and
\begin{equation*}
P_{i,j}:=v^i P_0v^{-j},
\end{equation*}
for $i,j=0,\ldots, \ord(r)-1$. Here  $P_0$ is the orthogonal projection onto the span of $E_k$ with $k$ divisible by $\ord(r)$.

Indeed,  it is easy to see that the subalgebra generated by all $M_{r,x}(f)$ is just the algebra of all diagonal operators in $\ell^2(\Z)$ whose diagonal elements are $\ord(r)$ periodic. But this is precisely the algebra generated by $P_{i,i}$, $i=0,\ldots, \ord(r)-1$.
Additionally a simple calculation shows that we have:
\begin{equation*}
v=P_{1,0}+P_{2,1}+\ldots+P_{\ord,(r)-1,\ord(r)-2}+uP_{0,\ord(r)-1}.
\end{equation*}
Thus, one can express $v$ and $M_{r,x}(f)$'s through $u$ and $P_{i,j}$'s and the other way around. It is easy to see that $P_{i,j}$ are units for $M_{\ord(r)}(\C)$ and $u$ commutes with  $P_{i,j}$ and thus together they generate $C(S^1)\otimes M_{\ord(r)}(\C)$.
\end{proof}

Let $\mathcal{G}_p$ be the group of all p-adic roots of unity and, as before, let $G_r$ be its subgroup generated by $r$. 
The following theorem describes faithful representations of $A_r$ and $A_r'$ in this case.
\begin{prop}
      Suppose that $r$ is a root of unity.  Let $\gamma : \mathcal{G}_p / G_r \to \mathcal{G}_p$ be a section of the quotient map $\mathcal{G}_p \to \mathcal{G}_p / G_r$.
      The following is a faithful representation of $A_r':$ 
      $$
      \pi' : A_r' \to B(\ell^2(\Z/\ord(r)\Z \times \Z_{\geq 0} \times \mathcal{G}_p / G_r \times \Z)),
      $$ 
      given by 
    \begin{equation*}
\begin{aligned}
        &V_r \mapsto v_r'  &\textup{where } &v_r' \phi(k,L,\omega G_r,x) = \phi(k-1\text{ mod }(\ord(r)),L,\omega G_r,x)   \\
        &M_f \mapsto M_{f}'  &\textup{where } &M_f' \phi(k,L,\omega G_r,x) = f(r^k p^L \cdot \gamma(\omega G_r)(1+px)) \phi(k,L,\omega G_r,x). 
\end{aligned}    \end{equation*}
  Similarly, we have a faithful representation of $A_r:$ 
      $$
      \pi : A_r \to B(\ell^2(\Z\times \Z_{\geq 0} \times \mathcal{G}_p / G_r \times \Z)),
      $$ 
      given by 
    \begin{equation*}
\begin{aligned}
        &V_r \mapsto v_r &\textup{where }& v_r \phi(k,L,\omega G_r,x) = \phi(k-1,L,\omega G_r,x)   \\
        &M_f \mapsto \mathcal{M}_{f}  &\textup{where }& \mathcal{M}_f \phi(k,L,\omega G_r,x) = f(r^k p^L \cdot \gamma(\omega G_r)(1+px)) \phi(k,L,\omega G_r,x). 
\end{aligned}    \end{equation*}
\end{prop}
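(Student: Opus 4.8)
The plan is to prove both faithfulness statements in parallel, following closely the three–step template of the proof of Theorem \ref{ideal_structure}: first verify that the covariance relations are respected, then verify that each representation is faithful on the coefficient algebra $C(\Z_p)$, and finally upgrade this to faithfulness of the full crossed product by an O'Donovan–type argument. The structural input replacing the orbit decomposition of Case~I is the root–of–unity factorization of the unit sphere. Explicitly, every nonzero $z\in\Z_p$ can be written uniquely as
\[
z = p^L\, r^k\, \gamma(\omega G_r)\,(1+pw),
\]
with $L\in\Z_{\geq 0}$, $0\le k<\ord(r)$ (so that $r^k$ runs over $G_r$), $\omega G_r\in\mathcal{G}_p/G_r$, and $w\in\Z_p$; this is the decomposition $\Z_p\setminus\{0\}=\bigsqcup_{L\geq0}p^L\Z_p^\times$ combined with the factorization \eqref{root_decomp} and the coset decomposition $\mathcal{G}_p=\bigsqcup_{\omega G_r}\gamma(\omega G_r)G_r$. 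The extra $\Z$–factor in the Hilbert space accounts for the component $1+p\Z_p$, which the now finite group $G_r$ no longer fills out, and the crucial point is that $\{1+px:x\in\Z\}$ is dense in $1+p\Z_p$ because $\Z$ is dense in $\Z_p$.

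For Step~1, a direct computation identical to the opening of the proof of Theorem \ref{ideal_structure} gives $v_r^*\mathcal{M}_f v_r=\mathcal{M}_{\beta_r f}$, and likewise $(v_r')^*M_f'v_r'=M_{\beta_r f}'$, so the defining covariance relation $V_rM_fV_r^*=M_{\alpha_r f}$ is respected in both cases; for $\pi'$ one additionally observes that the cyclic shift on $\ell^2(\Z/\ord(r)\Z)$ satisfies $(v_r')^{\ord(r)}=I$, which is exactly the relation distinguishing $A_r'$ from $A_r$. For Step~2, suppose $\mathcal{M}_f=0$, respectively $M_f'=0$; then $f$ vanishes at every point of the form $r^kp^L\gamma(\omega G_r)(1+px)$. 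By the displayed decomposition together with density of $\{1+px:x\in\Z\}$ in $1+p\Z_p$, these points are dense in $\Z_p$, and continuity of $f$ forces $f=0$. Hence $\pi$ and $\pi'$ are faithful on $C^*(\mathcal{M}_f:f\in C(\Z_p))$ and $C^*(M_f':f\in C(\Z_p))$ respectively.

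For Step~3 I would construct the gauge–type automorphism group exactly as before. For $A_r$, set $U_r(\theta)\phi(k,L,\omega G_r,x)=e^{2\pi ik\theta}\phi(k,L,\omega G_r,x)$ and $\rho_\theta(a)=U_r(\theta)aU_r(-\theta)$, so that $\rho_\theta(v_r)=e^{2\pi i\theta}v_r$ and $\rho_\theta(\mathcal{M}_f)=\mathcal{M}_f$; the expectation $\mathbb{E}(a)=\int_0^1\rho_\theta(a)\,d\theta$ is norm–decreasing and sends $a=\sum_{n\in\Z}v_r^n\mathcal{M}_{f_n}$ to $\mathcal{M}_{f_0}$, giving $\|\mathcal{M}_{f_0}\|\le\|a\|$, so O'Donovan's criterion applies verbatim. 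For $A_r'$ the only change is that the dual group is finite: one replaces the integral by the average $\mathbb{E}(a)=\frac{1}{\ord(r)}\sum_{j=0}^{\ord(r)-1}U_r'(j)\,a\,U_r'(-j)$, where $U_r'(j)\phi(k,L,\omega G_r,x)=e^{2\pi ikj/\ord(r)}\phi(k,L,\omega G_r,x)$, and the same reasoning yields $\|M_{f_0}'\|\le\|a\|$ for $a=\sum_{n=0}^{\ord(r)-1}(v_r')^nM_{f_n}'$. Combined with Step~2 and faithfulness of the canonical conditional expectation, the finite–group form of O'Donovan's criterion gives faithfulness of both $\pi$ and $\pi'$.

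I expect the main obstacle to be the bookkeeping in Step~2, namely checking that the evaluation points are genuinely dense in $\Z_p$ now that $G_r$ is finite: unlike Case~I, where density came from an infinite orbit closure filling each sphere, here it must be supplied entirely by the $1+p\Z_p$ direction, so one must assemble the factorization \eqref{root_decomp}, the coset covering $G_r\cdot\gamma(\mathcal{G}_p/G_r)=\mathcal{G}_p$, and the density of $\Z$ in $\Z_p$ correctly. The remaining steps are a direct transcription of the proof of Theorem \ref{ideal_structure}, with the sole genuinely new wrinkle being the passage from the continuous gauge integral to the finite dual average for $A_r'$.
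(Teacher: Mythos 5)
Your proposal is correct and follows essentially the same route as the paper's proof: faithfulness on $C(\Z_p)$ via the factorization $z=r^kp^L\gamma(\omega G_r)(1+px)$ together with density of $\Z$ in $\Z_p$, then O'Donovan's criterion using the circle gauge action for $A_r$ and its finite average $\frac{1}{\ord(r)}\sum_{j=0}^{\ord(r)-1}\rho_j$ for $A_r'$. The only differences are cosmetic (you make the covariance check and the sphere decomposition more explicit than the paper does).
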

\begin{proof} From the definition of $\gamma$, any root of unity $\omega\in \mathcal{G}_p$ can be written as $r^k \cdot \gamma(\omega G_r)$ for some $k$.
Then, utilizing equation \eqref{root_decomp}, we can write any $z\ne 0$, with $|z|_p=p^{-L}$, as:
$$
z=r^k p^L \cdot \gamma(\omega G_r)(1+px)
$$
for some $x\in\Z_p$. Since $\Z$ is dense in $\Z_p$, it follows that representations $\pi'$ and $\pi$ are faithful on $C(\Z_p)$.

The proof of the remaining part of O'Donovan's conditions for the representation $\pi$ just repeats the arguments from Theorem \ref{ideal_structure}.  For the representation $\pi'$ it requires the following modification. Instead of a one-parameter group of unitaries $U_r(\theta)$, consider diagonal unitary operators $W_r(j)$ given by:
\begin{equation*}
W_r(j) \phi(k,L,\omega G_r,x)=e^{\frac{2\pi ijk}{\ord(r)}} \phi(k,L,\omega G_r,x),
\end{equation*}
with $j=0,1,\ldots, \ord(r)-1$. Clearly, we have:
\begin{equation*}
W_r(j)W_r(j')=W_r(j+j' \ \text{ mod }(\ord(r))).
\end{equation*}
Next, for $a \in C^*(v_r', M_f')$, we define maps $\rho_j$ by
$$
\rho_j (a) : =W_r(j) a W_r(-j). 
$$
Then $\rho_j$ are automorphisms of $C^*(v_r', M_f')$ and we can use the following expectation:
\begin{equation*}
E(a):=\frac{1}{\ord(r)}\sum_{j=0}^{\ord(r)-1}\rho_j (a)
\end{equation*}
to verify the remaining O'Donovan's condition.
\end{proof}

The above faithful representations let us easily deduce the structure of $I_r'$ and $I_r$. 
\begin{prop}
Suppose that $r \in \Z_p^\times$ is a root of unity. There exist isomorphisms of C$^*$-algebras:
$$
I_r' \cong M_{\ord(r)}(\C) \otimes c_0(\Z_{\geq 0}) \otimes C(\Z_p)
$$ 
and 
$$
I_r \cong I_r' \otimes C(S^1).
$$
\end{prop}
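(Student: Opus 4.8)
The plan is to extract the structure of $I_r'$ (and then $I_r$) directly from the faithful representations $\pi'$ and $\pi$ just established, exactly paralleling the argument in Proposition \ref{I_r structure}. First I would observe that, since $\pi_0'(I_r')=0$ by definition, a multiplication operator $M_f'$ lies in $I_r'$ precisely when $f(0)=0$; thus $I_r'$ is the C$^*$-algebra generated by $(v_r')^n M_f'$ with $f\in C(\Z_p)$ vanishing at $0$. The key is to read off what the orbit decomposition coming from the representation $\pi'$ does to such functions. Using the factorization $z=r^k p^L\,\gamma(\omega G_r)(1+px)$ of a nonzero $p$-adic integer, the spatial variables split as: $k\in \Z/\ord(r)\Z$ (the finite cyclic shift direction), $L\in\Z_{\geq 0}$ (the valuation), $\omega G_r\in\mathcal G_p/G_r$ (a finite set, since $\mathcal G_p$ has order $p-1$), and $x\in\Z$ dense in $\Z_p$ (encoding the $1+p\Z_p$ part).

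Next I would identify the continuous functions on $\Z_p$ vanishing at $0$, under this decomposition, with a space of sequences. The valuation variable $L$ is the only one ranging over an infinite discrete set with a limit (as $f(0)=0$ forces $f(p^L\cdots)\to 0$ uniformly as $L\to\infty$, using compactness of $G_r$ and $1+p\Z_p$ together with finiteness of $\mathcal G_p/G_r$), so it contributes a $c_0(\Z_{\geq 0})$ factor. The finite cyclic variable $k$ together with the finite quotient $\mathcal G_p/G_r$ generate, via the shift $v_r'$ and the projections onto the $k$-levels, the matrix algebra $M_{\ord(r)}(\C)$ exactly as in Proposition \ref{sphere_rep_prime}; indeed the subalgebra generated by the $M_f'$ restricted to one sphere together with $v_r'$ is the image $\pi_x'(A_r')\cong M_{\ord(r)}(\C)$. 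Finally, the remaining variable $x\in\Z\subset\Z_p$, on which $v_r'$ acts trivially, contributes a $C(\Z_p)$ factor, since the $1+px$ part sweeps out $1+p\Z_p\cong\Z_p$ and no generator shifts it. Assembling these independent tensor factors via a Stone-Weierstrass/tensor-splitting argument identical in spirit to the one in Proposition \ref{I_r structure} yields
$$
I_r' \cong M_{\ord(r)}(\C)\otimes c_0(\Z_{\geq 0})\otimes C(\Z_p).
$$

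For $I_r$ I would run the identical analysis on $\pi$, the only difference being that the shift direction $k$ now ranges over all of $\Z$ rather than $\Z/\ord(r)\Z$. By Proposition \ref{sphere_rep_prime} the per-sphere image is now $\pi_x(A_r)\cong C(S^1)\otimes M_{\ord(r)}(\C)$ instead of $M_{\ord(r)}(\C)$; the valuation and root-of-unity-quotient and $x$ variables behave exactly as before, contributing the same $c_0(\Z_{\geq 0})\otimes C(\Z_p)$. Factoring out the extra $C(S^1)$ then gives $I_r\cong I_r'\otimes C(S^1)$ directly, which is cleanest to state by comparing the two orbit pictures rather than rederiving everything.

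The main obstacle I anticipate is justifying the clean tensor-product splitting rather than merely an abstract isomorphism of the section algebras. Concretely, one must check that the generating operators $(v_r')^n M_f'$ really do generate all of $c_0(\Z_{\geq 0}\times\mathcal G_p/G_r,\,\pi_x'(A_r'))$ acting fiberwise, and that the fiber algebra $M_{\ord(r)}(\C)$ and the base $C(\Z_p)$ factor genuinely commute and decouple — in particular that $\alpha_r$ (equivalently the action of $v_r'$) touches only the finite cyclic/root-of-unity directions and acts trivially on the $C(\Z_p)$ coming from the $1+p\Z_p$ variable. This is where the density of $\Z$ in $\Z_p$, the finiteness of $\mathcal G_p/G_r$, the compactness of $G_r$, and Stone-Weierstrass must be combined carefully to upgrade pointwise statements to a norm-dense, hence C$^*$-algebraic, identification.
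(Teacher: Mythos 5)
Your proposal is correct and matches the paper's own argument essentially step for step: both identify $I_r'$ as generated by $(v_r')^nM_f'$ with $f(0)=0$, use the factorization $z=r^kp^L\gamma(\omega G_r)(1+px)$ to identify such functions with $C_0(\Z_{\geq 0}\times\mathcal{G}_p/G_r\times\Z_p,\,C(G_r))$, let $v_r'$ act only in the finite cyclic fiber so that the fiber algebra is the per-sphere image $M_{\ord(r)}(\C)$ from the earlier proposition, absorb the finite quotient $\mathcal{G}_p/G_r$, and repeat with fiber $C(S^1)\otimes M_{\ord(r)}(\C)$ for $I_r$. No substantive differences.
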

\begin{proof} Just as in the proof of Proposition \ref{I_r structure}, the statements are consequences of the previous two propositions. We describe details for $I_r'$. From the definition we have
\begin{equation*}
I_r'=C^*((v_r')^nM'_f : n\in \Z, f \in C(\Z_p), f(0)=0).
\end{equation*}
Then for nonzero $z$, using the decomposition $z=r^k p^L \cdot \gamma(\omega G_r)(1+px)$, we get an identification of
the space of continuous functions on $\Z_p$ vanishing at zero with the space of continuous functions vanishing at infinity on a locally compact space $\Z_{\geq 0} \times \Z_p^\times/G_r$ with values in $C(G_r)$:
\begin{equation*}
C_0(\Z_{\geq 0}\times  \mathcal{G}_p / G_r  \times \Z_p, C(G_r)).
\end{equation*}

Next, it is convenient to view the Hilbert space of the representation $\pi'$ as:
\begin{equation*}
\ell^2(\Z/\ord(r)\Z \times \Z_{\geq 0} \times \mathcal{G}_p / G_r \times \Z)\cong 
\ell^2( \Z_{\geq 0} \times \mathcal{G}_p / G_r \times \Z, \ell^2(\Z/\ord(r)\Z)).
\end{equation*}
With this identification the operator $v_r'$ becomes the mod $\ord(r)$ shift acting on the space $\ell^2(\Z/\ord(r)\Z)$ of values of sequences in the Hilbert space of the representation $\pi'$. This gives an identification:
\begin{equation*}
I_r'\cong C_0(\Z_{\geq 0}\times  \mathcal{G}_p / G_r  \times \Z_p, C(G_r) \rtimes_{\alpha_r} \Z/\ord(r)\Z).
\end{equation*}
Again, up to isomorphism, one can drop from above the finite set $ \mathcal{G}_p / G_r$, which, together with Proposition \ref {sphere_rep_prime} proves the claimed structure of $I_r'$. The details for $I_r$ are completely analogous.
\end{proof}

The above results allow us again to compute the $K$-Theory of both $I_r'$ and $I_r$.
\begin{prop}
Suppose $r \in \Z_p^\times$ is a root of unity. The $K$-groups of $I_r'$ and $I_r$ are given by 
$$
K_0(I_r') \cong c_0(\Z_{\geq 0}\times \Z_p, \Z) , \textup{ and } K_1(I_r') = 0,
$$
and 
$$
K_0(I_r) \cong c_0(\Z_{\geq 0}\times \Z_p, \Z) , \textup{ and } K_1(I_r)  \cong c_0(\Z_{\geq 0}\times \Z_p, \Z).
$$
\end{prop}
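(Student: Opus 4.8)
The plan is to reduce both computations to elementary $K$-theory manipulations using the two structural isomorphisms $I_r' \cong M_{\ord(r)}(\C) \otimes c_0(\Z_{\geq 0}) \otimes C(\Z_p)$ and $I_r \cong I_r' \otimes C(S^1)$ established in the preceding proposition, mirroring the strategy of Proposition \ref{kunn_prop}. The whole argument is a continuity-plus-K\"unneth computation once the structure results are in hand.

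First I would treat $I_r'$. Since $K$-theory is stable (Morita invariant), the matrix factor $M_{\ord(r)}(\C)$ can be discarded, so $K_*(I_r') \cong K_*(c_0(\Z_{\geq 0}) \otimes C(\Z_p))$. I then identify $c_0(\Z_{\geq 0}) \otimes C(\Z_p)$ with the $c_0$-direct sum $\bigoplus_{n \in \Z_{\geq 0}} C(\Z_p)$, which is the inductive limit of its finite partial sums. Because $K$-theory is continuous with respect to inductive limits and commutes with countable direct sums (the direct-sum computations in \cite{rordam2000introduction}), this gives $K_i(I_r') \cong \bigoplus_n K_i(C(\Z_p))$. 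The next step is to compute $K_*(C(\Z_p))$: since $\Z_p$ is a compact totally disconnected (Cantor-type) space, $C(\Z_p)$ is a commutative AF algebra, so $K_1(C(\Z_p)) = 0$ while $K_0(C(\Z_p)) \cong C(\Z_p,\Z)$, the group of locally constant integer-valued functions. Unravelling the notation — a locally constant $\Z$-valued function on $\Z_{\geq 0} \times \Z_p$ vanishing at infinity is exactly a finitely supported sequence in $C(\Z_p,\Z)$ — yields $K_0(I_r') \cong \bigoplus_n C(\Z_p,\Z) \cong c_0(\Z_{\geq 0} \times \Z_p, \Z)$ and $K_1(I_r') = 0$, as claimed.

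For $I_r = I_r' \otimes C(S^1)$ I would invoke the K\"unneth theorem of \cite{schochet1982topological}. Here $I_r'$ is AF, hence in the bootstrap class, and $K_*(C(S^1)) \cong (\Z,\Z)$ is torsion-free, so the Tor obstruction vanishes and $K_n(I_r) \cong \bigoplus_{i+j=n} K_i(I_r') \otimes K_j(C(S^1))$. Equivalently, the split exact sequence $0 \to C_0(\R) \to C(S^1) \to \C \to 0$ (splitting via evaluation at a point) gives $K_i(I_r) \cong K_i(I_r') \oplus K_{i-1}(I_r')$. Substituting $K_0(I_r') \cong c_0(\Z_{\geq 0} \times \Z_p, \Z)$ and $K_1(I_r') = 0$ then produces both $K_0(I_r) \cong c_0(\Z_{\geq 0} \times \Z_p, \Z)$ and $K_1(I_r) \cong c_0(\Z_{\geq 0} \times \Z_p, \Z)$.

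I expect the delicate points to be bookkeeping rather than substance: confirming that $K$-theory passes through the infinite $c_0$-direct sum, pinning down $K_0(C(\Z_p)) \cong C(\Z_p,\Z)$ for the profinite space $\Z_p$, and checking that the statement's notation $c_0(\Z_{\geq 0}\times\Z_p,\Z)$ really coincides with $\bigoplus_n C(\Z_p,\Z)$. The K\"unneth step is essentially automatic once one verifies that $I_r'$ lies in the bootstrap class and that the Tor term drops out because $K_*(C(S^1))$ is free.
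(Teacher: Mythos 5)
Your argument is correct and follows essentially the same route as the paper: discard the matrix factor by stability of $K$-theory, compute $K_*\bigl(c_0(\Z_{\geq 0})\otimes C(\Z_p)\bigr)$ using the AF structure (the paper phrases this as repeating the K\"unneth computation of Proposition \ref{kunn_prop}, while you use the $c_0$-direct-sum decomposition and continuity of $K$-theory, which is an equivalent bookkeeping choice), and then apply the K\"unneth formula to $I_r'\otimes C(S^1)$, where the Tor term vanishes since $K_*(C(S^1))$ is free. The delicate points you flag --- $K_0(C(\Z_p))\cong C(\Z_p,\Z)$, $K_1(C(\Z_p))=0$, and the identification of $\bigoplus_n C(\Z_p,\Z)$ with $c_0(\Z_{\geq 0}\times\Z_p,\Z)$ --- are all handled correctly.
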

\begin{proof} By stability of $K_i$ groups we can ignore the $ M_{\ord(r)}(\C)$ factor in $I_r'$. Then the calculation, using K\"unneth's formula, is the same as in the proof of Proposition \ref{kunn_prop}.
\end{proof}

Now it is routine again to compute the $K$-Theory of both $A_r'$ and $A_r$.
\begin{prop}
Suppose $r \in \Z_p^\times$ is a root of unity. We have: 
$$
K_0(A_r') \cong c_0(\Z_{\geq 0}\times \Z_p, \Z) \oplus \Z^{\ord(r)}, \textup{ and } K_1(A_r') = 0,
$$
and 
$$
K_0(A_r) \cong c_0(\Z_{\geq 0}\times \Z_p, \Z)\oplus \Z , \textup{ and } K_1(A_r)  \cong c_0(\Z_{\geq 0}\times \Z_p, \Z)\oplus \Z.
$$
\end{prop}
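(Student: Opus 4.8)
The plan is to feed both short exact sequences
$$0 \to I_r' \to A_r' \to C(\Z/\ord(r)\Z) \to 0 \quad\text{and}\quad 0 \to I_r \to A_r \to C(S^1) \to 0$$
into the six-term exact sequence in $K$-theory, exactly as was done for Case I in Proposition \ref{ktheory_prop}. The $K$-groups of the ideals $I_r'$ and $I_r$ were determined in the preceding proposition, so the only remaining inputs are the $K$-groups of the two quotients: for the circle one has $K_0(C(S^1)) \cong K_1(C(S^1)) \cong \Z$, while $C(\Z/\ord(r)\Z) \cong \C^{\ord(r)}$ is finite-dimensional, giving $K_0(C(\Z/\ord(r)\Z)) \cong \Z^{\ord(r)}$ and $K_1(C(\Z/\ord(r)\Z)) = 0$.

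For $A_r$ I would invoke the general observation recorded at the end of Section \ref{Case I}: since $r$ is a unit, $\alpha_r$ is an automorphism and $V_r$ is a genuine unitary in $A_r$ lifting the generating unitary of $C(S^1)$, so the index and exponential maps vanish and one obtains $K_i(A_r) \cong K_i(I_r) \oplus \Z$ for $i=0,1$. Substituting the values of $K_0(I_r)$ and $K_1(I_r)$ from the preceding proposition then yields the stated formulas directly.

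For $A_r'$ the computation is even cleaner and requires no lifting argument. Because both $K_1(I_r') = 0$ and $K_1(C(\Z/\ord(r)\Z)) = 0$, every connecting map in the six-term sequence has either zero domain or zero codomain and therefore vanishes automatically. The bottom row then forces $K_1(A_r') = 0$, while the top row collapses to a short exact sequence
$$0 \to K_0(I_r') \to K_0(A_r') \to \Z^{\ord(r)} \to 0.$$
Since $\Z^{\ord(r)}$ is free, this sequence splits by the Splitting Lemma (see \cite{rordam2000introduction}), giving $K_0(A_r') \cong K_0(I_r') \oplus \Z^{\ord(r)}$; inserting $K_0(I_r') \cong c_0(\Z_{\geq 0}\times\Z_p,\Z)$ completes the case.

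I do not anticipate a genuine obstacle here, as the argument parallels Case I. The only points requiring a little care are bookkeeping ones: correctly identifying $C(\Z/\ord(r)\Z)$ as $\C^{\ord(r)}$, so that its $K_0$ contributes the free summand $\Z^{\ord(r)}$ rather than a single copy of $\Z$, and confirming that the connecting maps vanish---automatic for $A_r'$ from the triviality of the relevant $K_1$ groups, and for $A_r$ from the unitarity of $V_r$.
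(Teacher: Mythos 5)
Your proposal is correct and follows essentially the same route as the paper: the six-term exact sequence applied to both short exact sequences, vanishing connecting maps, the splitting of the resulting $K_0$ sequences, and the appeal to the remark after Proposition \ref{ktheory_prop} for $A_r$. The only cosmetic difference is that you justify the splitting for $A_r'$ by freeness of $\Z^{\ord(r)}$, whereas the paper reuses the explicit right-splitting by lifting; both are valid.
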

\begin{proof} Again, as in Proposition \ref{ktheory_prop}, both exponential and index maps in the 6-term exact sequence below are zero:
\begin{equation*}
\begin{tikzcd}
 K_0(I_r') \arrow{r}   & K_0(A_r')  \arrow{r} & K_0(C(\Z/\ord(r)\Z)) \arrow{d}{\textup{ exp }}    \\
 K_1(C(\Z/\ord(r)\Z)) \arrow{u}{\textup{ ind }} & K_1(A_r') \arrow{l} & K_1(I_r') \arrow{l}
\end{tikzcd}
\end{equation*}
Thus, all the $K_1$ groups in the bottom row are zero. The short exact sequence in the top row splits by the same argument as before, yielding the result for $K_0(A_r')$ since we have:
\begin{equation*}
K_0(C(\Z/\ord(r)\Z))\cong \Z^{\ord(r)}.
\end{equation*}

The formulas for $K_i(A_r)$ follow immediately from the previous proposition and the remark immediately following the proof of Proposition \ref{ktheory_prop}.
\end{proof}

\section{Structure of the Crossed Product: Case III} 
\label{Case III} 
In this section we find that when $|r|_p = p^{-N}$, with $N>0$, the crossed product can be identified with a Hensel-Steinitz algebra. This is done by constructing a dense subset of $\Z_p$ on which multiplication by $r$ takes a similar form algebraically as multiplication by $s$ for the standard $s$-adic integers. We then use this subset to construct a faithful representation of $A_r$ which is unitarily equivalent to a faithful representation of the Hensel-Steinitz algebras mentioned in Section \ref{Hensel}. 
\subsection{A Faithful Representation of $A_r$} Consider an integer $r$ with $|r|_p = p^{-N}$. Hence, there is $r'$ with $|r'|_p = 1$, such that $r = r' p^N$.  Denote  $\mathcal{D}^{(N)}_r$ to be the following subset of $\Z_p$: 
$$
\mathcal{D}^{(N)}_r = \left\{ x_0 + x_1r + x_2r^2 + \dots + x_n r^n : n\in\Z_{\geq 0} ,\, x_i \in \{0, \dots , p^N-1 \} \right\}.
$$
We have the following lemma. 
\begin{lem}
$\mathcal{D}^{(N)}_r$ is a dense subset of $\Z_p$. 
\end{lem}
\begin{proof}
We can construct an approximation of any $x \in \Z_p$ by elements of $\mathcal{D}^{(N)}_r$ by repeatedly using the following division algorithm:
there exist $q, c \in \Z_p$ such that:
\begin{equation}\label{div_alg}
x=qr+c
\end{equation}
with $c\in \{0, \dots , p^N-1 \}$. Thus, we only need to establish this algorithm.

To construct $q, c$ consider the expansion
$$x = \sum_{i=0}^\infty x_i p^i \in \Z_p,$$ 
with $x_i \in \{0, \dots , p-1\}$. We let $c = \sum_{i=0}^{N-1} x_i p^i$, and notice that 
$$
|x - c|_p \leq p^{-N}=|r|_p. 
$$
Consequently, $r$ divides $x-c$ in $\Z_p$ giving the quotient $q$ in the formula \eqref{div_alg}.
This completes the proof. 
\end{proof}
The above lemma allows one to construct the following faithful representation of $A_r$. 
\begin{prop}
The map $\tilde{\pi} : A_r \to B(\ell^2( \mathcal{D}^{(N)}_r))$ given by
\begin{equation*}
  \begin{aligned}
 &V_r \mapsto v_r & \text{ where }\quad &v_r E_x = E_{rx}  \\
   &M_f \mapsto M_{r,f} & \text{ where } \quad&M_{r,f} E_x = f(x) E_x 
\end{aligned}
\end{equation*}
defines a faithful representation of $A_r$. 
\end{prop}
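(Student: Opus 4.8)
The plan is to avoid the spatial gauge-action argument used in Theorem~\ref{ideal_structure} and instead prove faithfulness by identifying $\tilde\pi$, up to unitary equivalence and a relabelling automorphism, with the standard faithful representation of the Hensel--Steinitz algebra $HS(p^N)$ from Section~\ref{Hensel}. First I would confirm that $\tilde\pi$ is a well-defined representation. Since $r\neq 0$ and $\Z_p$ is an integral domain, $x\mapsto rx$ is injective on $\mathcal{D}^{(N)}_r$, so $v_r$ carries the orthonormal basis to an orthonormal system and $v_r^*v_r=I$. For the covariance relation I would compute $v_r^*E_y$, which equals $E_{y/r}$ when $y\in r\mathcal{D}^{(N)}_r$ and $0$ otherwise, and observe that for $y\in\mathcal{D}^{(N)}_r$ the condition $y\in r\mathcal{D}^{(N)}_r$ is equivalent to $r\mid y$ in $\Z_p$ (both say the constant digit $y_0=y \bmod p^N$ vanishes, using $|r|_p=p^{-N}$). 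This yields $v_rM_{r,f}v_r^*=M_{r,\alpha_r(f)}$, so by the universal property of the crossed product by endomorphism \cite{stacey1993crossed} the assignment extends to a $*$-homomorphism $\tilde\pi$ on $A_r$.

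The heart of the argument is a change of variables. Writing $s=p^N$ and $r=r'p^N$ with $r'$ a unit, the division algorithm from the preceding Lemma gives every element of $\mathcal{D}^{(N)}_r$ a \emph{unique} expansion $\sum_i x_i r^i$ with $x_i\in\{0,\dots,p^N-1\}$ (here $x_0 = x \bmod p^N$, and one recurses on $(x-x_0)/r$). Matching these base-$r$ digits with the base-$s$ digits of nonnegative integers defines a bijection $\Phi\colon\Z_{\geq 0}\to\mathcal{D}^{(N)}_r$, $\Phi(\sum_i l_i s^i)=\sum_i l_i r^i$, satisfying $\Phi(sl)=r\Phi(l)$. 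Because $r^k$ and $p^{Nk}$ generate the same ideal of $\Z_p$, both $\Phi$ and $\Phi^{-1}$ carry congruences mod $p^{Nk}$ to congruences mod $p^{Nk}$, so they are uniformly continuous and extend to mutually inverse homeomorphisms $\bar\Phi,\bar\Phi^{-1}\colon\Z_p\to\Z_p$ with $\bar\Phi(p^Ny)=r\bar\Phi(y)$ and $\bar\Phi(p^N\Z_p)=r\Z_p$. I would then check that the induced automorphism $\Phi^*\colon f\mapsto f\circ\bar\Phi$ of $C(\Z_p)$ intertwines the two endomorphisms, $\Phi^*\circ\alpha_r=\alpha_{p^N}\circ\Phi^*$, which follows directly from $\bar\Phi(p^Ny)=r\bar\Phi(y)$ together with $p^N\mid y\iff r\mid\bar\Phi(y)$. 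After the canonical identification $\Z_{p^N}\cong\Z_p$ (the defining inverse systems are cofinal), the universal property then produces a C$^*$-isomorphism $\Theta\colon A_r\to HS(p^N)$ with $\Theta(V_r)=V$ and $\Theta(M_f)=M_{\Phi^*f}$.

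Finally I would let $\sigma$ denote the faithful representation of $HS(p^N)$ on $\ell^2(\Z_{\geq 0})$ from Section~\ref{Hensel} (with $VE_l=E_{sl}$ and $\mu_gE_l=g(l)E_l$), and let $W\colon\ell^2(\Z_{\geq 0})\to\ell^2(\mathcal{D}^{(N)}_r)$ be the unitary $WE_l=E_{\Phi(l)}$. A generator check gives $WVW^*=v_r$ (from $\Phi(sl)=r\Phi(l)$) and $W\mu_gW^*=M_{r,\,g\circ\bar\Phi^{-1}}$. Combining these with $\Theta(M_f)=M_{f\circ\bar\Phi}$, the two occurrences of $\bar\Phi$ cancel, so $(\mathrm{Ad}_W\circ\sigma\circ\Theta)(V_r)=v_r=\tilde\pi(V_r)$ and $(\mathrm{Ad}_W\circ\sigma\circ\Theta)(M_f)=M_{r,f}=\tilde\pi(M_f)$. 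Hence $\tilde\pi=\mathrm{Ad}_W\circ\sigma\circ\Theta$ on all of $A_r$, and since $\sigma$ is faithful while $\Theta$ and $\mathrm{Ad}_W$ are isomorphisms, $\tilde\pi$ is faithful.

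I expect the main obstacle to be the second paragraph: verifying that $\bar\Phi$ is a genuine homeomorphism intertwining multiplication by $p^N$ with multiplication by $r$, and that $\Phi^*$ intertwines $\alpha_r$ with $\alpha_{p^N}$, so that the isomorphism $\Theta$ actually exists. Once this dynamical identification is in hand, the unitary equivalence and the conclusion are formal. It is worth noting why the Case~I method fails here: $v_r$ is a genuine non-unitary isometry fixing $E_0$ (since $r\cdot 0=0$), and the covariance relation forces any implementing gauge unitary to be diagonal, which is incompatible with a fixed point; this is exactly why reduction to the already-established faithful representation of $HS(p^N)$ in \cite{HKMP} is the natural route.
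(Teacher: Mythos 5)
Your proof is correct, but it takes a genuinely different route from the paper's. The paper proves faithfulness of $\tilde{\pi}$ directly by verifying O'Donovan's condition: it introduces the degree function $\kappa(x)=n$ for $x=\sum_{i=0}^n x_ir^i$ with $x_n\ne 0$, defines diagonal unitaries $U_r(\theta)E_x=e^{2\pi i\theta\kappa(x)}E_x$ so that $\kappa(rx)=\kappa(x)+1$ yields $\rho_\theta(v_r)=e^{2\pi i\theta}v_r$, averages over $\theta$ to obtain an expectation onto the diagonal subalgebra, and concludes from density of $\mathcal{D}^{(N)}_r$ in $\Z_p$; the isomorphism $A_r\cong HS(p^N)$ is then deduced \emph{afterwards} from the unitary equivalence of the two faithful representations. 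You invert this order: you first show that $\bar\Phi$ conjugates $(\Z_p,\ y\mapsto p^Ny)$ to $(\Z_p,\ x\mapsto rx)$, obtain $\Theta\colon A_r\to HS(p^N)$ from the universal property, and transport the known faithful representation of $HS(p^N)$ by the unitary $W$. Both arguments work; yours front-loads the combinatorics (uniqueness of base-$r$ digit expansions and bicontinuity of $\Phi$, which you correctly identify as the crux and which follow by induction on digits using $r^k\Z_p=p^{Nk}\Z_p$) and makes the paper's final proposition immediate, at the price of importing the faithfulness of $\pi_V$ from \cite{HKMP} as a black box. One caveat: your closing claim that the gauge-unitary method ``fails'' here is overstated, since the paper uses exactly that method; still, your instinct about the fixed vector $E_0$ touches a real subtlety, because $\kappa(r\cdot 0)=\kappa(0)+1$ is false, so the spatial gauge argument must really be run on the invariant subspace $\ell^2(\mathcal{D}^{(N)}_r\setminus\{0\})$ with the summand spanned by $E_0$ handled separately. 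Your route sidesteps this point entirely.
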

\begin{proof}
It is clear that $\tilde{\pi}$ defines a $*$-representation of $A_r$. To show the O'Donovan conditions,  we proceed similarly to what we described in previous sections. First, we introduce a one-parameter group of unitary operators $U_r(\theta): \ell^2(\mathcal{D}_r^{(N)}) \to \ell^2(\mathcal{D}_r^{(N)})$ given by 
$$
U_r(\theta) E_{x} = e^{2 \pi i \theta \kappa(x)} E_{x},
$$
where $\kappa(x) = n$ for $x = \sum_{i=0}^n x_i r^i$ and $x_n \ne 0$. The key property of $\kappa(x)$ is:
\begin{equation*}
\kappa(rx)=\kappa(x)+1.
\end{equation*}
For $a \in C^*(v_r, M_{r,f} : f \in C(\Z_p))$, define also 
$$
\rho_\theta(a) = U_r(\theta) a U_r(-\theta),
$$
as well as an expectation
$$
\mathbb{E}(a) = \int_0^1 \rho_\theta(a) d \theta. 
$$
Clearly, we have: 
$$
\| \mathbb{E}(a) \| \leq \int_0^1 \| U_r(\theta) \| \|a \| \| U_r(-\theta) \| d \theta \leq \|a\|. 
$$
Note also that 
$$
\rho_\theta(v_r)= e^{2 \pi i \theta}v_r,
$$
while 
$$
\rho_\theta (M_{r,f}) = M_{r,f}. 
$$
Let $a$ be of the form 
$$
a = \sum_{m \geq 0} v^m_r M_{r,f_m} + \sum_{m < 0} M_{r,f_m} (v^*_r)^{-m},
$$
where the sums are finite.  A straightforward calculation shows that all terms for which $m \neq 0$ will have an expectation of $0$.
Hence, 
$$
\mathbb{E}(a) = M_{r, f_0}, 
$$
and, for $a$ as above, we have
$$
\| M_{r,f_0} \| \leq \|a\|.
$$
Since the above formula holds, by Proposition 2.1 in \cite{boyd1993faithful}, it suffices to check that the representation is faithful on the invariant part,  which is generated by $M_{r,f}$ for $f \in C(\Z_p)$. If $M_{r,f}$ vanishes, then $f$ must vanish on $\mathcal{D}^{(N)}_r \subseteq C(\Z_p)$, which is dense. It follows by continuity that $f = 0$, and the representation is faithful. 
\end{proof}
\subsection{$A_r$ as a Hensel-Steinitz Algebra with $s = p^N$} Consider the Hensel-Steinitz algebra  associated with $s = p^N$. Denote the faithful representation of $HS(s)$ on $\ell^2(\Z_{\geq 0})$ described in  Section \ref{Hensel} by $\pi_{V}: HS(s) \to B(\ell^2(\Z_{\geq 0}))$. In what follows, we construct a unitary equivalence between $\tilde{\pi}$ and  $\pi_{V}$. Consider the  map $\mathcal{U}:\ell^2(\Z_{\geq 0}) \to \ell^2(\mathcal{D}^{(N)}_r)$ defined on basis elements as follows: 
$$
\mathcal{U} E_{k(x)} = E_{x},
$$
where  $k(x) = \sum_{i=0}^n x_i p^{iN}$ and $x = \sum_{i=0}^n x_i r^i$, with $x_i \in \{0, \dots , p^N - 1 \}$.  It is easy to check this map is invertible, and satisfies 
$$
\mathcal{U} V \mathcal{U}^{-1} = v_r,
$$
where again $V$ is as defined equation \eqref{V shift} with $s = p^N$. Similarly, we have that
$$
C^*( \mathcal{U} M_f \mathcal{U}^{-1} : f \in C(\Z_{p^N}) ) = C^*(M_{r,f}) \cong C(\Z_{p^N}). 
$$
These considerations, along with the facts that $\tilde{\pi}$ is a faithful representation of $A_r$ and $\pi_{V}$ is a faithful representation of $HS(s)$, immediately lead to the following proposition.
\begin{prop}
    Let $r = r'p^N$, with $|r'|_p = 1$. We have the following isomorphism of C$^*$-algebras: 
    $$
    A_r \cong HS(s),
    $$
    where $HS(s)$ denotes the Hensel-Steinitz algebra corresponding to $s = p^N$. 
\end{prop}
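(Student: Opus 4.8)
The plan is to establish the isomorphism $A_r \cong HS(s)$ with $s = p^N$ by exhibiting an explicit unitary equivalence between the two faithful representations, $\tilde\pi$ of $A_r$ on $\ell^2(\mathcal{D}^{(N)}_r)$ and $\pi_V$ of $HS(s)$ on $\ell^2(\Z_{\geq 0})$. The central device is the bijection $\mathcal{U}: \ell^2(\Z_{\geq 0}) \to \ell^2(\mathcal{D}^{(N)}_r)$ defined on basis vectors by $\mathcal{U}E_{k(x)} = E_x$, where the index $k(x) = \sum_{i=0}^n x_i p^{iN}$ is paired with the point $x = \sum_{i=0}^n x_i r^i$ under the base-$p^N$ digit correspondence $x_i \in \{0,\dots,p^N-1\}$. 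First I would verify that $k \mapsto x$ is a genuine bijection between $\Z_{\geq 0}$ and $\mathcal{D}^{(N)}_r$: every nonnegative integer has a unique base-$p^N$ expansion, and by the division algorithm in the density lemma above every element of $\mathcal{D}^{(N)}_r$ has a unique finite expansion in powers of $r$ with digits in $\{0,\dots,p^N-1\}$, so $\mathcal{U}$ is a well-defined unitary.

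Once $\mathcal{U}$ is in hand, the verification reduces to matching generators. For the shift, I would compute $\mathcal{U}V\mathcal{U}^{-1}E_x$: applying $\mathcal{U}^{-1}$ sends $E_x$ to $E_{k(x)}$, the operator $V$ (with $s = p^N$) sends this to $E_{s \cdot k(x)} = E_{p^N k(x)}$, and since multiplying $k(x)$ by $p^N$ shifts every base-$p^N$ digit up by one place, the corresponding point in $\mathcal{D}^{(N)}_r$ is exactly $rx$; hence $\mathcal{U}V\mathcal{U}^{-1}E_x = E_{rx} = v_r E_x$, giving $\mathcal{U}V\mathcal{U}^{-1} = v_r$. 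For the multiplication operators I would note that conjugation by $\mathcal{U}$ carries the diagonal operator $M_f$ (diagonal entries $f(k)$) to the diagonal operator with entries $f$ evaluated along the relabeled index set; the resulting algebra $C^*(\mathcal{U}M_f\mathcal{U}^{-1} : f \in C(\Z_{p^N}))$ coincides with $C^*(M_{r,f}) \cong C(\Z_{p^N})$, both being the algebra of diagonal multiplication operators corresponding to continuous functions on the respective dense index sets.

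With the generators matched, the conclusion is immediate: conjugation by the unitary $\mathcal{U}$ is a $*$-isomorphism from $\pi_V(HS(s))$ onto $\tilde\pi(A_r)$ carrying the universal generators $V, M_f$ to $v_r, M_{r,f}$. Since $\pi_V$ is a faithful representation of $HS(s)$ and $\tilde\pi$ is a faithful representation of $A_r$ (both established above), we obtain
$$
A_r \cong \tilde\pi(A_r) = \mathcal{U}\,\pi_V(HS(s))\,\mathcal{U}^{-1} \cong HS(s).
$$
I expect the main obstacle to be the bookkeeping in the shift computation: one must confirm carefully that the arithmetic operation ``multiply the integer index $k$ by $p^N$'' corresponds precisely, under the digit-for-digit identification, to the geometric operation ``multiply the $p$-adic point $x$ by $r = r'p^N$'' on the set $\mathcal{D}^{(N)}_r$. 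The subtlety is that the algebraic form of multiplication by $r$ on $\mathcal{D}^{(N)}_r$ is designed to mimic the place-shift of base-$p^N$ expansions, and making this correspondence rigorous — rather than merely plausible — is where the content of the density lemma and the uniqueness of the $r$-adic-style expansion are genuinely used.
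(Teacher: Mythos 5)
Your proposal is correct and follows essentially the same route as the paper: the paper also defines the unitary $\mathcal{U}E_{k(x)} = E_x$ via the base-$p^N$ digit correspondence, verifies $\mathcal{U}V\mathcal{U}^{-1} = v_r$ and the matching of the diagonal subalgebras, and concludes from the faithfulness of $\tilde\pi$ and $\pi_V$. The extra care you take with the bijectivity of $k \mapsto x$ and the digit-shift computation is exactly the ``easy to check'' step the paper leaves implicit.
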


\end{document}